\theoremstyle{plain}
\newtheorem{theorem}{Theorem}
\newtheorem{lemma}{Lemma}
\newtheorem{notation}{Notation}
\newtheorem{prop}{Proposition}
\newtheorem{remark}{Remark}%
\theoremstyle{definition}
\newtheorem{definition}{Definition}%
\newtheorem{assumption}{Assumption}
\providecommand{\keywords}[1]
{
\small
\textbf{\textit{Keywords---}} #1
}
  \def\hoge<#1>{\langle #1 \rangle}
\DeclareMathOperator*{\argmax}{arg\,max}
\title{The maximum likelihood type estimator of SDEs with fractional Brownian motion under small noise asymptotics in the rough case}
\author{Shohei Nakajima \thanks{email:~snakajima@aoni.waseda.jp}}
\affil{School of International Liberal Studies, Waseda University\\1-6-1, Nishi-waseda, Shinjuku-ku, Tokyo, 169-8050, Japan}
\begin{document}
\maketitle
\begin{abstract}
  We study the problem of parametric estimation for continuously observed stochastic differential equation driven by fractional Brownian motion with Hurst index $H\in(1/3,1/2)$. Under some assumptions on drift and diffusion coefficients, we construct maximum likelihood estimator and establish its the asymptotic normality and moment convergence of the drift parameter when a small dispersion coefficient $\varepsilon\rightarrow 0$.
\end{abstract}
\keywords{asymptotic normality, fractional Brownian motion, parameter estimation, rough path analysis}
 \section{Introduction}
 Let $\{X_t^\varepsilon\}_{t\in[0,T]}$ be the solution to the following d-dimensional stochastic differential equations:
  \begin{equation}
   \label{SDE}
   X^\varepsilon_t=X_0+\int_0^tb(X_s^\varepsilon,\theta_0)ds+\varepsilon \int_0^t\sigma(X_s^\varepsilon)dB_s,~~~t\in[0,T],
 \end{equation}
where $\theta_0\in\Theta$ with $\Theta$ being a bounded and open convex subset of $\mathbb{R}^m$ admitting Sobolev's inequalities for embedding $W^{1,q}(\Theta)\hookrightarrow C(\bar{\Theta})$.
 Futhermore, $X_0\in\mathbb{R}^d,~\varepsilon>0$ are known constants, $b$ is an $\mathbb{R}^d$ valued function defined on $\mathbb{R}^d\times\Theta$, $\sigma$ is an $\mathbb{R}^d\otimes\mathbb{R}^r$ valued function defined on $\mathbb{R}^d$,
  $B_t=\left(B_t^{(1)},\dots,B_t^{(r)}\right),~B_t^{(i)},~i=1,\dots,r$ are independent one--dimensional fractional Brownian motion with Hurst index $H=(H_1,\dots,H_r)$, $H_i\in(\frac{1}{3},\frac{1}{2})$ and the stochastic integral appearing in \eqref{SDE} is defined by the rough integral sense (see Definition \ref{rough integral}).
  It is known that the fractional Brownian motion is a semi-martingale if and only if its Hurst index is equal to $1/2$. Therefore, the tool of the stochastic analysis for semimartingale is not applicable for equation \eqref{SDE}. The theory of rough path analysis allows us to study \eqref{SDE} in the pathwise sense and guarantees the existence and uniqueness of a solution. The reader is referred to \cite{Fritz}, \cite{Gubinelli} for further details.

  Recently, the stochastic differential equation of the type \eqref{SDE} is more important in the field of the mathematical finance (see \cite{Gatheral}). However, the statistical theory on the equation \eqref{SDE} is not yet sufficiently developed to be applied in practice. Our purpose is likelihood analysis of the equation \eqref{SDE}, the most fundamental aspect in the statistical inference. We construct the maximum likelihood type estimator for unknown parameter $\theta_0\in\Theta$ from a realization $\{X_t^\varepsilon\}_{t\in[0,T]}$ and derive its asymptotic properties when $\varepsilon\rightarrow 0.$

  In the case where $H_i=1/2$, for all $i=1,\dots,r$, that is, $\{B_t\}_{t\in[0,T]}$ is an $r$-dimensional Brownian motion, parameter estimation problems have been studied by many authors. In particular, the maximum likelihood estimator via the likelihood function based on the Girsanov density is the one of the optimal methods for estimation (see Prakasa Rao \cite{Rao}, Liptser and Shiryaev \cite{Liptser} and Kutoyants \cite{Kutoyants}).

  The parametric inference for stochastic differential equations driven by the fractional Brownian motion has been studied by Brouste and Kleptsyna \cite{Brouste}, Kleptsyna and Le Breton \cite{Kleptsyna} and Tudor and Viens \cite{Tudor} under the assumption $T\rightarrow\infty$. In all these papers, the drift functions depend linearly on the parameter $\theta\in\Theta$ and the maximum likelihood estimators have an explicit expression. When the drift function is nonlinear in $\theta\in\Theta$, Chiba \cite{Chiba} proposed an M-estimator based on the likelihood function and established asymptotic properties of the estimator when the Hurst index $H$ is contained in $(1/4,1/2)$.
  For the least squares type estimators, the reader may refer to  Hu and Nualart \cite{Hu}, Neuenkirch and Tindel \cite{Neuenkirch}, Hu et. al \cite{Hu2} and Marie \cite{Marie}. Papavasiliou and Ladroue \cite{Papavasiliou} studied the statistical inference for differential equations driven by rough paths. They proposed an estimator based on the method of moments under the assumption which is many independent observations of sample paths over a short time interval being available.

   The maximum likelihood type estimator for one-dimensional stochastic  differential equations with small fractional noise have been studied by Nakajima and Shimizu \cite{Nakajima} in additive noise case, that is $\sigma(x)=1$ for $x\in\mathbb{R}$. Moreover, the multiplicative noise case with $H>1/2$ is studied in \cite{mnakajima}. In the case of our model, the construction of the maximum likelihood estimator is more complicated because of the need to deal with enhanced processes when deriving the likelihood function. The main focus of this paper is on the construction of the estimator and the derivation of the apriori estimate for the solution of the equation \eqref{SDE} to prove asymptotic properties of the estimator. The proof of the asymptotic properties of our estimator is almost same as in \cite{Nakajima}.

  This paper organized as follows: in Section 2 we make some notations and introduce the framework of the rough path analysis. In Section 3 we derive the apriori estimate for the solution of the equation \eqref{SDE}. In Section 4 we construct the maximum likelihood type estimator and state its asymptotic properties. In Section 5 we prove one of the sufficient conditions of the polynomial type large deviation inequality investigated by Yoshida \cite{Yoshida}.
  \section{Preliminaries}
  Without loss of generality, we assume that $\varepsilon\in(0,1]$. Let $(\Omega,\mathcal{F},P)$ be a probability space and $\mathcal{F}_t$ is the filtration generated by $\{B_s\}_{s\in[0,t]}$. Let us recall necessary facts from rough path theory of integration (for further details, see \cite{Fritz}). Suppose that $V,W$ are finite-dimensional Banach spaces and $\mathcal{L}(V,W)$ be the space of linear bounded operators from $V$ to $W$. The norm in any finite dimensional Banach space will be denoted by $|\cdot|$.
 For $0<\alpha\le 1$, we denote $\alpha$-H\"older continuous functions space by
\begin{equation*}
   C^\alpha([0,T],V):=\left\{Y\in C\left([0,T],V\right)|~\|Y\|_{\alpha,[0,T]}<\infty\right\}
\end{equation*}
where
\begin{equation*}
  \|Y\|_{\alpha,[0,T]}:=\sup_{s,t\in[0,T]:s\ne t}\frac{|Y_t-Y_s|}{|t-s|^\alpha}.
\end{equation*}
We define the space $C_2^\alpha([0,T]^2; V^{\otimes 2})$ which is the set of all two-variables functions $\mathbb{Y}:[0,T]^2\rightarrow V^{\otimes 2}$ such that the norm
\begin{equation*}
  \|\mathbb{Y}\|_{2\alpha,[0,T]}:=\sup_{s,t\in[0,T]: s\ne t}\frac{|\mathbb{Y}_{s,t}|}{|t-s|^{2\alpha}},
\end{equation*}
is finite.
\begin{notation}
  For the one-variable function $Y$ on $[0,T]$ we will use the notation $Y_{s,t}:=Y_t-Y_s$.
\end{notation}
\begin{definition}
We call $\alpha$--H\"older rough path in symbols $\mathscr{C}^\alpha([0,T],V)$, as those pairs $(Y,\mathbb{Y})$ such that $Y\in C^\alpha([0,T],V),~\mathbb{Y}\in C^\alpha_2([0,T]^2,V^{\otimes 2})$ and Chen's relations
\begin{equation*}
  \mathbb{Y}_{s,t}-\mathbb{Y}_{s,u}-\mathbb{Y}_{u,t}=Y_{s,u}\otimes Y_{u,t},
\end{equation*}
hold for each triple $(s,u,t)\in[0,T]^3$.
 \end{definition}
  For $(Y,\mathbb{Y})\in\mathscr{C}^\alpha([0,T],V)$, we denote the $\alpha$-H\"older rough path semi-norm by
 \begin{equation*}
   \|(Y,\mathbb{Y})\|_{\alpha,[0,T]}:=\|Y\|_{\alpha,[0,T]}+\|\mathbb{Y}\|^{1/2}_{\alpha,[0,T]}.
 \end{equation*}
 \begin{definition}
   Given a path $Y\in C^\alpha([0,T],V)$, we say that $Z\in C^\alpha([0,T],\mathcal{L}(V,W))$ is controlled by $Y$ if there exists $Z^\prime\in C^\alpha([0,T],\mathcal{L}\left(V,\mathcal{L}(V,W)\right))$ such that the relation
   \begin{equation*}
     Z_{s,t}=Z^{\prime}_sY_{s,t}+R^{Z}_{s,t},
   \end{equation*}
    holds with $R^Z\in C_2^\alpha([0,T],\mathcal{L}(V,W))$. The set of all pairs $(Z,Z^\prime)$ such that $Z$ is controlled by $Y$ is denoted by $\mathcal{D}^{2\alpha}_Y([0,T],\mathcal{L}(V,W))$.
 \end{definition}
 \begin{definition}
   \label{rough integral}
   Let $(Y,\mathbb{Y})\in\mathscr{C}^\alpha([0,T],V)$ and $(Z,Z^\prime)\in\mathcal{D}_Y^{2\alpha}([0,T],\mathcal{L}(V,W))$. We call rough integral of $Z$ against $(Y,\mathbb{Y})$ as
   \begin{equation*}
     \int_0^TZ_sdY_s:=\lim_{|\mathcal{P}|\rightarrow 0}\sum_{t_k,t_{k+1}\in\mathcal{P}}\left(Z_{t_k}Y_{t_k,t_{k+1}}+Z_{t_k}^\prime\mathbb{Y}_{t_k,t_{k+1}}\right),
   \end{equation*}
   where $\mathcal{P}$ denotes partition of $[0,T]$ and $|\mathcal{P}|$ is the length of the largest element.
 \end{definition}
\begin{prop}
  \label{rie}
  Let $(Y,\mathbb{Y})\in\mathscr{C}^\alpha([0,T],V)$ and $(Z,Z^\prime)\in\mathcal{D}_Y^{2\alpha}([0,T],\mathcal{L}(V,W))$. Then there exists a constant $C>0$ such that for any $s,t\in[0,T]$
  \begin{equation*}
    \left|\int_s^tZ_rdY_r-Z_sY_{s,t}-Z^\prime_s\mathbb{Y}_{s,t}\right|\le C\left(\|Y\|_\alpha\|R^Z\|_{2\alpha}+\|\mathbb{Y}\|_{2\alpha}\|Z^\prime\|_\alpha\right)|t-s|^{3\alpha}
  \end{equation*}
  holds.
\end{prop}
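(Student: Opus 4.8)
The plan is to realize the rough integral through the Sewing Lemma, taking as the local (germ) approximation the compensated increment
$$\Xi_{s,t} := Z_s Y_{s,t} + Z^\prime_s \mathbb{Y}_{s,t},$$
whose Riemann-type sums are exactly those defining the integral in Definition \ref{rough integral}. First I would form the second-order defect $\delta\Xi_{s,u,t} := \Xi_{s,t} - \Xi_{s,u} - \Xi_{u,t}$ for $s\le u\le t$ and reorganize it. Using the additivity $Y_{s,t} = Y_{s,u} + Y_{u,t}$ for the first-order part and Chen's relation $\mathbb{Y}_{s,t} - \mathbb{Y}_{s,u} - \mathbb{Y}_{u,t} = Y_{s,u}\otimes Y_{u,t}$ for the second-order part, the terms carrying $Y_{s,t}$ and $\mathbb{Y}_{s,t}$ collapse into
$$\delta\Xi_{s,u,t} = -Z_{s,u}Y_{u,t} - Z^\prime_{s,u}\mathbb{Y}_{u,t} + Z^\prime_s(Y_{s,u}\otimes Y_{u,t}).$$

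Next I would insert the controlled-path expansion $Z_{s,u} = Z^\prime_s Y_{s,u} + R^Z_{s,u}$. The decisive point is that the term $-(Z^\prime_s Y_{s,u})Y_{u,t}$ thereby produced cancels exactly against $Z^\prime_s(Y_{s,u}\otimes Y_{u,t})$, since under the identification of $Z^\prime_s\in\mathcal{L}(V,\mathcal{L}(V,W))$ with a bilinear map both quantities equal $Z^\prime_s$ evaluated on $Y_{s,u}\otimes Y_{u,t}$. What survives is the genuinely higher-order remainder
$$\delta\Xi_{s,u,t} = -R^Z_{s,u}Y_{u,t} - Z^\prime_{s,u}\mathbb{Y}_{u,t}.$$
Estimating each factor by its Hölder seminorm and then bounding $|u-s|$ and $|t-u|$ by $|t-s|$ gives
$$|\delta\Xi_{s,u,t}| \le \|R^Z\|_{2\alpha}\|Y\|_\alpha|u-s|^{2\alpha}|t-u|^\alpha + \|Z^\prime\|_\alpha\|\mathbb{Y}\|_{2\alpha}|u-s|^\alpha|t-u|^{2\alpha} \le \big(\|R^Z\|_{2\alpha}\|Y\|_\alpha + \|Z^\prime\|_\alpha\|\mathbb{Y}\|_{2\alpha}\big)|t-s|^{3\alpha}.$$

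Because $\alpha>1/3$ in our setting, the exponent satisfies $3\alpha>1$, so the Sewing Lemma (see \cite{Fritz}) applies to the germ $\Xi$: it produces a unique additive functional $\mathcal{I}_{s,t}$ satisfying $|\mathcal{I}_{s,t} - \Xi_{s,t}|\le C|t-s|^{3\alpha}$, with $C$ depending only on $\alpha$ and on the constant in the defect bound, and realized as the limit of the sums $\sum_{t_k,t_{k+1}\in\mathcal{P}}\Xi_{t_k,t_{k+1}}$ as $|\mathcal{P}|\to 0$. By Definition \ref{rough integral} this limit is precisely $\int_s^t Z_r dY_r$, whence the claimed inequality follows with the stated constant. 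I expect the only genuinely delicate step to be the algebraic cancellation in the defect computation, since it is exactly this cancellation that upgrades the naive order $2\alpha$ to the order $3\alpha$ required to invoke the Sewing Lemma; the remaining norm estimates and the application of the lemma are routine.
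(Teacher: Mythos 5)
Your proof is correct: the defect computation, the cancellation via Chen's relation and the controlled-path expansion, and the application of the Sewing Lemma with $3\alpha>1$ are all carried out accurately. The paper itself states Proposition \ref{rie} without proof, citing \cite{Fritz}, and your argument is precisely the standard one given there (Theorem 4.10 in that reference), so there is nothing to add.
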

\begin{remark}
  \label{CI}
  For $(Y,\mathbb{Y})\in\mathscr{C}^\alpha([0,T],V)$, we consider two controlled rough paths $(Z,Z^\prime)\in\mathcal{D}_Y^{2\alpha}([0,T],W),~(G,G^\prime)\in\mathcal{D}_Y^{2\alpha}([0,T],\mathcal{L}(V,W))$. Then we can define the integral of $Z$ against $G$ by
  \begin{equation*}
    \int_s^tZ_udG_u:=\lim_{|\mathcal{P}|\rightarrow 0}\sum_{t_k,t_{k+1}\in\mathcal{P}}\left(Z_{t_k}G_{t_k,t_{k+1}}+Z_{t_k}^\prime G_{t_k}^\prime\mathbb{Y}_{t_k,t_{k+1}}\right),
  \end{equation*}
  and there exists $C>0$ such that the following inequality holds:
  \begin{equation*}
    \left|\int_s^tZ_udG_u-Z_sG_{s,t}-Z_{s}^\prime G_{s}^\prime\mathbb{Y}_{s,t}\right|\le C\left(\|Y\|_\alpha\|R^Z\|_{2\alpha}+\|\mathbb{Y}\|_{2\alpha}\|Z^\prime G^\prime\|_\alpha\right)|t-s|^{3\alpha}.
  \end{equation*}
\end{remark}

Let us turn back to equation \eqref{SDE} and consider the case $V=\mathbb{R}^r,~W=\mathbb{R}^d$. Define a two-variable function $\mathbb{B}$ as follows:
    \begin{center}
\begin{equation*}
  \begin{aligned}
      \mathbb{B}_{s,t}&=\left(\mathbb{B}_{s,t}^{(i,j)}\right)_{i,j=1}^r,\\
      \mathbb{B}_{s,t}^{(i,j)}&\overset{L^2}{=}\lim_{|\mathcal{P}|\rightarrow 0}\int_\mathcal{P}B_{s,r}^{(i)}dB_r^{(j)},~\int_\mathcal{P}B_{s,r}^{(i)}dB_r^{(j)}=\sum_{t_k,t_{k+1}\in\mathcal{P}}B_{s,t_k}^{(i)}B_{t_k,t_{k+1}}^{(j)},~~~
      1\le i<j\le r,\\
      \mathbb{B}^{(i,i)}_{s,t}&=\frac{1}{2}(B_{s,t}^{(i)})^2,~~~1\le i\le r,\\
      \mathbb{B}^{(i,j)}_{s,t}&=-\mathbb{B}^{(j,i)}_{s,t}+B_{s,t}^{(i)}B_{s,t}^{(j)},
  \end{aligned}
\end{equation*}
    \end{center}
    for $s,t\in[0,T]$, where $\mathcal{P}=\left\{s=t_0<t_1<\cdots<t_n=t\right\}$ is an arbitrary partition of the segment $[s,t]$. Note that the integral sums in $\mathbb{B}_{s,t}^{i,j}$ have a finite $L^2$-limit by Proposition 10.3 in \cite{Fritz}.

\begin{prop}
  \label{rfb}
  For any fixed $\alpha\in(1/3,h)$ such that $0<h<\min_{i=1,\dots,r}H_i$, the pair $(B,\mathbb{B})$ is geometirc rough path, that is, $(B,\mathbb{B})\in\mathscr{C}^\alpha([0,T],\mathbb{R}^r)$ and the following relation holds for $s,t\in[0,T]$,
    \begin{equation*}
      {\rm Sym}(\mathbb{B}_{s,t}):=\frac{1}{2}\left(\mathbb{B}_{s,t}+\mathbb{B}^*_{s,t}\right)=\frac{1}{2}B_{s,t}\otimes B_{s,t},
    \end{equation*}
where $*$ represents the transpose of a matrix. Moreover,
\begin{equation*}
  E\|\mathbb{B}\|_{2\alpha}^q<\infty,
\end{equation*}
   for any $q\ge 1$.
\end{prop}
\begin{remark}
  \label{SC}
  Let $(Z,Z^\prime)\in\mathcal{D}_{B}^{2\alpha}\left([0,T],\mathbb{R}^d\right)$ and $\phi\in C^2(\mathbb{R}^d,\mathbb{R}^d\otimes\mathbb{R}^r)$.
  Then one can define the controlled rough path $(\phi(Z),\phi(Z)^\prime)\in\mathcal{D}_{B}^{2\alpha}\left([0,T],\mathbb{R}^d\otimes\mathbb{R}^r\right)$ by
  \begin{equation*}
    \phi(Z)_t=\phi(Z_t),~\phi(Z_t)^\prime=\nabla\phi(Z_t)Z_t^\prime.
  \end{equation*}
\end{remark}
 \begin{definition}
   \label{rs}
   A process $\{X_t^\varepsilon\}_{t\in[0,T]}$ is called a solution to \eqref{SDE} if $(X^\varepsilon,\varepsilon\sigma(X^\varepsilon))\in\mathcal{D}_{B}^{2\alpha}\left([0,T],\mathbb{R}^d)\right)$ and the equality
   \begin{equation*}
     X_t^\varepsilon=X_0+\int_0^tb(X_s^\varepsilon,\theta_0)ds+\varepsilon\int_0^t\sigma(X_s^\varepsilon)dB_s,
   \end{equation*}
   holds a.s. where the integral with respect to $B$ appearing to right hand side is understood in the sense of Definition \ref{rough integral}.
 \end{definition}
 \section{Apriori estimate}
 Throughout this paper, we will use the following notations:
 \begin{notation}
 For any $a,b\ge 0$, the symbol $a\lesssim b$ means that there exists a universal constant $C>0$ such that $a\le Cb$.
 \end{notation}
 \begin{notation}
   Let $C^{k,l}(\mathbb{R}^d\times\Theta,\mathbb{R}^d)$ be a set of functions $f(x,\theta):\mathbb{R}^d\times\Theta\rightarrow\mathbb{R}^d$ that is $k$ and $l$ times differentiable with respect to $x$ and $\theta$, respectively.
   Moreover, let $C_b^m(\mathbb{R}^d,\mathbb{R}^d\otimes\mathbb{R}^r)$ be a set of functions $f(x):\mathbb{R}^d\rightarrow\mathbb{R}^d\otimes\mathbb{R}^r$ which are $m$ times differentiable and all derivatives up to $m$ times are bounded.
 \end{notation}
 In this section, we establish apriori estimate of the solution \eqref{SDE} in the sense of the Definition \ref{rs}. We introduce the following assumptions on the coefficients of the equation \eqref{SDE}.
 \begin{assumption}
   \label{(A1)}
   The function $b$ in \eqref{SDE} is of $C^{1,4}(\mathbb{R}^d\times\Theta,\mathbb{R}^d)$-class and there exist constants $c_1, c_2, N>0$ such that for every $x,y\in\mathbb{R}^d,~\theta\in\Theta$ and integers $0\le i\le 4$, $0\le j\le 1$,
 \begin{equation*}
   \begin{aligned}
     |b(x,\theta)-b(y,\theta)|\le c_1|x-y|,~~~\left|\nabla_x^j\nabla_\theta^ib(x,\theta)\right|\le c_2(1+|x|^N).
   \end{aligned}
 \end{equation*}
   \end{assumption}
   \begin{assumption}
     \label{(A2)}
     The function $\sigma$ in \eqref{SDE} is of $\sigma\in C^3_b(\mathbb{R}^d,\mathbb{R}^d\otimes\mathbb{R}^r)$-class.
   \end{assumption}
 In order to establish some estimates of the solution to \eqref{SDE}, we prepare the following lemma which is found in Exercise 4.24 in \cite{Fritz}.
 \begin{lemma}
   \label{HHE}
   Let $\alpha\in(0,1),~\delta>0,$ and $Z\in C^\alpha([0,T];V)$. Assume that
   \begin{equation*}
     \|Z\|_{\alpha,\delta}:=\sup\frac{|Z_{s,t}|}{|t-s|^\alpha}\le M,
   \end{equation*}
   where sup is restricted to time $s,t\in[0,T]$ and $|t-s|\le \delta$. Then
   \begin{equation*}
     \|Z\|_{\alpha,[0,T]}\le M(1\vee 2\delta^{-(1-\alpha)}).
   \end{equation*}
 \end{lemma}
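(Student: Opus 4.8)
The plan is a standard chaining argument: the hypothesis controls every increment over a time interval of length at most $\delta$, and one recovers control of an arbitrary increment $Z_{s,t}$ by cutting $[s,t]$ into finitely many pieces of length $\le\delta$ and summing via the triangle inequality. The ``$1\vee$'' in the asserted bound merely separates the regime in which no cutting is needed ($|t-s|\le\delta$) from the regime in which it is.

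First I would fix $s<t$ in $[0,T]$ and dispose of the easy case. If $t-s\le\delta$, then the definition of $\|Z\|_{\alpha,\delta}$ applies directly and gives $|Z_{s,t}|\le M|t-s|^\alpha$, which is already dominated by $M(1\vee 2\delta^{-(1-\alpha)})|t-s|^\alpha$. Hence it remains to treat $t-s>\delta$. In that case I set $N:=\lceil (t-s)/\delta\rceil$, a positive integer with $(t-s)/N\le\delta$, and introduce the equally spaced points $u_k:=s+k(t-s)/N$ for $k=0,1,\dots,N$, which partition $[s,t]$ into $N$ subintervals each of length $(t-s)/N\le\delta$.

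Next I would telescope and estimate. Writing $Z_{s,t}=\sum_{k=1}^{N}Z_{u_{k-1},u_k}$ and applying the triangle inequality, the hypothesis applies to each term because every subinterval has length $\le\delta$, so
\[
|Z_{s,t}|\le\sum_{k=1}^{N}|Z_{u_{k-1},u_k}|\le N\,M\Big(\tfrac{t-s}{N}\Big)^{\alpha}=M\,N^{1-\alpha}(t-s)^{\alpha},
\]
and therefore $|Z_{s,t}|/|t-s|^{\alpha}\le M\,N^{1-\alpha}$. The final step is the counting bound on $N$: since $t-s>\delta$ we have $(t-s)/\delta>1$, whence $N<(t-s)/\delta+1<2(t-s)/\delta$, and consequently $N^{1-\alpha}<2^{1-\alpha}(t-s)^{1-\alpha}\delta^{-(1-\alpha)}\le 2\,(t-s)^{1-\alpha}\delta^{-(1-\alpha)}$. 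For increments on the unit scale, $t-s\le 1$, the factor $(t-s)^{1-\alpha}$ is $\le 1$, so $|Z_{s,t}|/|t-s|^{\alpha}\le 2M\delta^{-(1-\alpha)}$; combining the two cases and taking the supremum over $s\ne t$ yields $\|Z\|_{\alpha,[0,T]}\le M(1\vee 2\delta^{-(1-\alpha)})$.

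There is no deep obstacle here; the lemma is elementary and the triangle inequality does all the real work. The only point requiring a little care is the constant: one must subdivide near-optimally (equal pieces with $N=\lceil (t-s)/\delta\rceil$) and invoke the elementary inequality $\lceil x\rceil<2x$, valid for $x>1$, to convert $N^{1-\alpha}$ into exactly $2\delta^{-(1-\alpha)}$. I note in passing that on a general interval the same computation produces an additional $(t-s)^{1-\alpha}\le T^{1-\alpha}$ factor, so the stated constant corresponds to the normalization $T\le 1$ implicit in the cited exercise; a coarser choice of partition would still prove $\alpha$-H\"older continuity but with a worse constant.
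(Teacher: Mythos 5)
Your proof is correct, and there is nothing in the paper to compare it against: the paper does not prove this lemma at all, but quotes it from Exercise 4.24 of Friz and Hairer \cite{Fritz}, and the standard chaining argument you give---telescoping over $N=\lceil (t-s)/\delta\rceil$ equal pieces, each of length $\le\delta$, and converting $N^{1-\alpha}$ via $\lceil x\rceil<2x$ for $x>1$---is precisely the argument that citation defers to. Your closing caveat is also worth keeping, since it flags a genuine (though harmless) imprecision in the statement as printed: the chaining computation only gives $|Z_{s,t}|/|t-s|^{\alpha}\le 2M\,(t-s)^{1-\alpha}\delta^{-(1-\alpha)}$, and the factor $(t-s)^{1-\alpha}\le T^{1-\alpha}$ cannot be dropped when $T>1$. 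Indeed, for $Z_t=t$ one has $\|Z\|_{\alpha,\delta}=\delta^{1-\alpha}=:M$, while $\|Z\|_{\alpha,[0,T]}=T^{1-\alpha}$, which exceeds the stated bound $M\left(1\vee 2\delta^{-(1-\alpha)}\right)=2$ (for $\delta\le 1$) as soon as $T^{1-\alpha}>2$. So, as you say, the clean constant presupposes $T\le 1$; in general the conclusion should read $\|Z\|_{\alpha,[0,T]}\le M\left(1\vee 2T^{1-\alpha}\delta^{-(1-\alpha)}\right)$. This correction is immaterial for the paper, because where the lemma is invoked (the proof of Lemma \ref{HE}) the constants $C_i$ are explicitly allowed to depend on $T$, so the extra factor $T^{1-\alpha}$ is simply absorbed.
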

 \begin{lemma}
   \label{HE}
   Under Assumptions \ref{(A1)} and \ref{(A2)}, for every $1/3<\alpha<h$, there exist constants $C_i>0,~i=1,2,3,$ depending on $T, X_0, \alpha$ such that
   \begin{align}
     &\|X^\varepsilon\|_{\alpha,[0,T]}\le C_1\left(1+\|(B,\mathbb{B})\|_{\alpha,[0,T]}^{1/\alpha}\right)\\
     \label{RLX}
     &\|R^{X^\varepsilon}\|_{2\alpha,[0,T]}\le C_2\left(1+\|(B,\mathbb{B})\|_{\alpha,[0,T]}^{2/\alpha}\right)\\
     \label{RLSX}
     &\|R^{\sigma(X^\varepsilon)}\|_{2\alpha,[0,T]}\le C_2\left(1+\|(B,\mathbb{B})\|_{\alpha,[0,T]}^{2/\alpha}\right).
   \end{align}
 \end{lemma}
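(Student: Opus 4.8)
Write $N:=\|(B,\mathbb{B})\|_{\alpha,[0,T]}$. The plan is to derive a self-contained estimate on short intervals by means of the rough-integral bound of Proposition \ref{rie}, then to globalise it by patching, with the linear growth of the drift absorbed through a discrete Gronwall recursion. Since $(X^\varepsilon,\varepsilon\sigma(X^\varepsilon))$ is the controlled solution, its Gubinelli derivative is $(X^\varepsilon)'=\varepsilon\sigma(X^\varepsilon)$, and by Remark \ref{SC} the pair with $\sigma(X^\varepsilon)'=\nabla\sigma(X^\varepsilon)\varepsilon\sigma(X^\varepsilon)$ is again controlled by $B$; by Assumption \ref{(A2)} the functions $\sigma,\nabla\sigma,\nabla^2\sigma$ are bounded, so $\|\sigma(X^\varepsilon)\|_\infty$ and $\|\sigma(X^\varepsilon)'\|_\infty$ are bounded uniformly in $\varepsilon\in(0,1]$. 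I would first dispose of \eqref{RLSX}: a second order Taylor expansion of $\sigma$ combined with $X^\varepsilon_{s,t}=\varepsilon\sigma(X^\varepsilon_s)B_{s,t}+R^{X^\varepsilon}_{s,t}$ yields the pointwise identity
\[
R^{\sigma(X^\varepsilon)}_{s,t}=\nabla\sigma(X^\varepsilon_s)R^{X^\varepsilon}_{s,t}+\frac12\nabla^2\sigma(\xi_{s,t})\,(X^\varepsilon_{s,t})^{\otimes 2},
\]
so that $\|R^{\sigma(X^\varepsilon)}\|_{2\alpha}\lesssim\|R^{X^\varepsilon}\|_{2\alpha}+\|X^\varepsilon\|_\alpha^2$. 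Hence \eqref{RLSX} follows from the first two bounds, and the identity also lets me eliminate $R^{\sigma(X^\varepsilon)}$ wherever it appears in the rough-integral error term.

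For the local estimate I would fix $s<t$ and expand, using Definition \ref{rs}, Proposition \ref{rie} and Remark \ref{CI},
\[
R^{X^\varepsilon}_{s,t}=\int_s^t b(X^\varepsilon_u,\theta_0)\,du+\varepsilon\,\sigma(X^\varepsilon)'_s\mathbb{B}_{s,t}+\varepsilon\,\mathcal{E}_{s,t},
\]
where $|\mathcal{E}_{s,t}|\lesssim\bigl(\|B\|_\alpha\|R^{\sigma(X^\varepsilon)}\|_{2\alpha}+\|\mathbb{B}\|_{2\alpha}\|\sigma(X^\varepsilon)'\|_\alpha\bigr)|t-s|^{3\alpha}$. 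Restricting to $|t-s|\le\delta$ and using the bound $|b(X^\varepsilon_u,\theta_0)|\le C(1+|X^\varepsilon_s|)+c_1\|X^\varepsilon\|_{\alpha;[s,t]}\delta^\alpha$ coming from Assumption \ref{(A1)}, together with the Taylor identity for $R^{\sigma(X^\varepsilon)}$, produces a coupled pair of inequalities for $\|X^\varepsilon\|_{\alpha;[s,t]}$ and $\|R^{X^\varepsilon}\|_{2\alpha;[s,t]}$. The crucial observation is that each term reproducing one of these two quantities on the right-hand side — namely $c_1\delta\,\|X^\varepsilon\|_{\alpha;[s,t]}$ from the drift and $\varepsilon\|B\|_{\alpha;[s,t]}\|\nabla\sigma\|_\infty\|R^{X^\varepsilon}\|_{2\alpha;[s,t]}|t-s|^{\alpha}$ from $\mathcal{E}$ — carries a coefficient that can be forced below $1/2$ by choosing $\delta$ so small that $\delta\vee N\delta^\alpha\le c_0$ for a fixed small $c_0$ (here $\varepsilon\le1$ is used). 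Absorbing these terms closes the local system and, since $\|B\|_\alpha\vee\|\mathbb{B}\|_{2\alpha}^{1/2}\le N$, gives
\[
\|X^\varepsilon\|_{\alpha;[s,t]}\lesssim\delta^{1-\alpha}(1+|X^\varepsilon_s|)+N,\qquad \|R^{X^\varepsilon}\|_{2\alpha;[s,t]}\lesssim 1+|X^\varepsilon_s|+N^2 .
\]

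To globalise I would take $\delta$ of order $(1+N)^{-1/\alpha}$, so that $N\delta^\alpha$ is comparable to $c_0$ and $\delta^{-(1-\alpha)}$ is of order $(1+N)^{(1-\alpha)/\alpha}$, and consider the induced partition $0=\tau_0<\cdots<\tau_M=T$ with $M$ of order $T/\delta$. The left-endpoint dependence in the local bound propagates through the recursion $|X^\varepsilon_{\tau_{i+1}}|\le(1+C\delta)|X^\varepsilon_{\tau_i}|+C(\delta+N\delta^\alpha)$, and discrete Gronwall gives $\sup_{[0,T]}|X^\varepsilon|\lesssim e^{CT}(1+N^{1/\alpha})$; this is the step where the constants acquire their dependence on $T$ and $X_0$. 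Substituting this sup bound back and patching the H\"older seminorm with Lemma \ref{HHE} yields
\[
\|X^\varepsilon\|_{\alpha,[0,T]}\le(1\vee2\delta^{-(1-\alpha)})\,\|X^\varepsilon\|_{\alpha,\delta}\lesssim 1+N^{1/\alpha},
\]
using $(1-\alpha)/\alpha+1=1/\alpha$, which is the first assertion. For \eqref{RLX} the remainder is a two-parameter object, so in place of Lemma \ref{HHE} I would exploit its almost-additivity $R^{X^\varepsilon}_{s,t}=R^{X^\varepsilon}_{s,u}+R^{X^\varepsilon}_{u,t}+(X^\varepsilon)'_{s,u}B_{u,t}$ and telescope over $\lceil|t-s|/\delta\rceil$ sub-intervals; the defect obeys $|(X^\varepsilon)'_{s,u}B_{u,t}|\lesssim N^{1+1/\alpha}|t-s|^{2\alpha}$, and summing it is precisely what produces the exponent $2/\alpha$, giving $\|R^{X^\varepsilon}\|_{2\alpha,[0,T]}\lesssim1+N^{2/\alpha}$.

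The main obstacle I anticipate is the simultaneous closure of the coupled system for $(\|X^\varepsilon\|_\alpha,\|R^{X^\varepsilon}\|_{2\alpha})$: the remainder $R^{X^\varepsilon}$ re-enters its own estimate through the rough-integral error $\mathcal{E}$ via $R^{\sigma(X^\varepsilon)}$, while the drift has only linear growth, so neither quantity can be controlled on all of $[0,T]$ at once. Both difficulties are resolved by localisation — smallness of $\delta$ turns the offending coefficients into contractions that can be absorbed — together with the discrete Gronwall recursion that tames the unbounded drift; the interplay of these two devices, and the bookkeeping needed to keep all constants independent of $\varepsilon\in(0,1]$, is the technical heart of the argument.
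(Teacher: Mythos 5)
Your proposal is correct and follows essentially the same route as the paper: the same local estimates from Proposition \ref{rie} and Remark \ref{CI}, the same choice of horizon $\delta\sim\left(1+\|(B,\mathbb{B})\|_{\alpha,[0,T]}\right)^{-1/\alpha}$, the same Taylor identity bounding $\|R^{\sigma(X^\varepsilon)}\|_{2\alpha}$ by $\|\nabla_x\sigma\|_\infty\|R^{X^\varepsilon}\|_{2\alpha}+\tfrac12\|\nabla_x^2\sigma\|_\infty\|X^\varepsilon\|_\alpha^2$, the same absorption of the self-referential terms, the same discrete Gronwall iteration for $\|X^\varepsilon\|_{\infty,[0,T]}$, and the same patching via Lemma \ref{HHE} giving the exponents $1/\alpha$ and $2/\alpha$. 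The one point where you are actually more careful than the paper is \eqref{RLX}: the paper invokes Lemma \ref{HHE} (stated for one-parameter paths) directly on the two-parameter remainder, whereas you telescope via the almost-additivity $R^{X^\varepsilon}_{s,t}=R^{X^\varepsilon}_{s,u}+R^{X^\varepsilon}_{u,t}+(X^\varepsilon)^\prime_{s,u}B_{u,t}$ and correctly identify the summed defect terms as the source of the exponent $2/\alpha$.
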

 \begin{proof}
   Using Proposition \ref{rie} and Assumption \ref{(A1)},\ref{(A2)}, for $s,t\in[0,T]$,
\begin{equation*}
  \begin{aligned}
   & \left|R_{s,t}^{X^\varepsilon}\right|=\left|X^\varepsilon_{s,t}-\varepsilon\sigma(X^\varepsilon_s)B_{s,t}\right|
    \\
    &\le\left|\int_s^tb(X^\varepsilon_u,\theta_0)du\right|
+\varepsilon\left|\int_s^t\sigma(X_u^\varepsilon)dB_u-\sigma(X_s^\varepsilon)B_{s,t}-\varepsilon\nabla_x\sigma(X_s^\varepsilon) \sigma(X_s^\varepsilon)\mathbb{B}_{s,t}\right|\\
&+\varepsilon^2\left|\nabla_x\sigma(X_s^\varepsilon) \sigma(X_s^\varepsilon)\mathbb{B}_{s,t}\right|
\\
    &\lesssim\left(1+\|X^\varepsilon\|_{\infty,[0,T]}\right)|t-s|+\|\mathbb{B}\|_{2\alpha,[s,t]}|t-s|^{2\alpha}
  \\
    &+\left(\|B\|_{\alpha,[s,t]}\|R^{\sigma(X^\varepsilon)}\|_{2\alpha,[s,t]}
      +\|\mathbb{B}\|_{2\alpha,[s,t]}\|X^\varepsilon\|_{\alpha,[s,t]}\right)|t-s|^{3\alpha},
  \end{aligned}
\end{equation*}
where $\|X^\varepsilon\|_{\infty,[0,T]}:=\sup_{t\in[0,T]}|X_t^\varepsilon|$. Thus we can see that there exists a constant $M_1>0$ such that
\begin{equation}
 \label{Rh}
 \begin{aligned}
   \|R^{X^\varepsilon}\|_{2\alpha,[s,t]}&\le M_1\Bigl(
   \left(1+\|X^\varepsilon\|_{\infty,[0,T]}\right)|t-s|^{1-2\alpha}+\|\mathbb{B}\|_{2\alpha,[0,T]}\\ &+\left(\|B\|_{\alpha,[0,T]}\|R^{\sigma(X^\varepsilon)}\|_{2\alpha,[s,t]}
   +\|\mathbb{B}\|_{2\alpha,[0,T]}\|X^\varepsilon\|_{\alpha,[s,t]}\right)|t-s|^{\alpha}\Bigl).
 \end{aligned}
\end{equation}
Similar to the derivation of \eqref{Rh} and inequality $\left|\int_s^tb(X^\varepsilon_u,\theta_0)du\right|\lesssim\|X^\varepsilon\|_{\alpha,[s,t]}|t-s|^{1+\alpha}+(1+|X_s^\varepsilon)|)|t-s|$, there exists a constant $M_2>0$ such that
\begin{equation*}
  \begin{aligned}
    \|X^\varepsilon\|_{\alpha,[s,t]}&\le M_2\biggl(\|X^\varepsilon\|_{\alpha,[s,t]}|t-s|+(1+|X_s^\varepsilon)|)|t-s|^{1-\alpha}+\|\mathbb{B}\|_{2\alpha,[0,T]}|t-s|^\alpha
    \\
    &\hspace{50pt}+\left(\|B\|_{\alpha,[0,T]}\|R^{\sigma(X^\varepsilon)}\|_{2\alpha,[s,t]}
    +\|\mathbb{B}\|_{2\alpha,[0,T]}\|X^\varepsilon\|_{\alpha,[s,t]}\right)|t-s|^{2\alpha}+\|B\|_{\alpha,[0,T]}\biggl).
  \end{aligned}
\end{equation*}
Set
\begin{equation}
  \label{h}
  \delta=\left(2(1+M_1)(1+M_2)(1+2M_2e^{2M_2T}T)(1+\|\nabla_x\sigma\|_\infty)\left(1+\|B\|_{\alpha,[0,T]}+\|\mathbb{B}\|_{2\alpha,[0,T]}^{1/2}\right)\right)^{-1/\alpha},
\end{equation}
where $\|\nabla_x\sigma\|_\infty=\sup_{x\in\mathbb{R}^d}|\nabla_x\sigma(x)|$. Then for $|t-s|<\delta$,
\begin{equation}
  \label{Xh}
  \begin{aligned}
    \|X^\varepsilon\|_{\alpha,[s,t]}&\le 2M_2\left((1+|X_s^\varepsilon|)\delta^{1-\alpha}+\|\mathbb{B}\|_{2\alpha,[0,T]}\delta^\alpha
  +\|B\|_{\alpha,[0,T]}\|R^{\sigma(X^\varepsilon)}\|_{2\alpha,[s,t]}
    \delta^{2\alpha}+\|B\|_{\alpha,[0,T]}\right).
  \end{aligned}
\end{equation}
We aim to estimate the term $\|X^\varepsilon\|_{\infty,[0,T]}$.  By the triangle inequality and \eqref{Xh}
  \begin{equation*}
    \begin{aligned}
      |X^\varepsilon_t|&\le |X^\varepsilon_s|+\|X^\varepsilon\|_{\alpha,[s,t]}|t-s|^{\alpha}
      \\
      &\le|X_s^\varepsilon|(1+2M_2\delta)+2M_2\left(\delta+\|\mathbb{B}\|_{2\alpha,[0,T]}\delta^{2\alpha}
    +\|B\|_{\alpha,[0,T]}\|R^{\sigma(X^\varepsilon)}\|_{2\alpha,[s,t]}
      \delta^{3\alpha}+\|B\|_{\alpha,[0,T]}\delta^\alpha\right).
    \end{aligned}
  \end{equation*}
  for $|t-s|<\delta$. Iterating the above estimate for $N=T/\delta$, we can get
  \begin{equation}
    \label{5}
    \begin{aligned}
      &\|X^\varepsilon\|_{\infty,[0,T]}\\
      &\le|X_0|(1+2M_2\delta)^N
      \\
      &\hspace{20pt}+2M_2\left(\delta+\|\mathbb{B}\|_{2\alpha,[0,T]}\delta^{2\alpha}
    +\|B\|_{\alpha,[0,T]}\|R^{\sigma(X^\varepsilon)}\|_{2\alpha,[s,t]}
      \delta^{3\alpha}+\|B\|_{\alpha,[0,T]}\delta^\alpha\right)\sum_{k=1}^N(1+2M_2\delta)^k
      \\
      &\le|X_0|(1+2M_2\delta)^N
      \\
      &\hspace{20pt}+2M_2\left(\delta+\|\mathbb{B}\|_{2\alpha,[0,T]}\delta^{2\alpha}
    +\|B\|_{\alpha,[0,T]}\|R^{\sigma(X^\varepsilon)}\|_{2\alpha,[s,t]}
      \delta^{3\alpha}+\|B\|_{\alpha,[0,T]}\delta^\alpha\right)(1+2M_2\delta)^NN
      \\
      &\le |X_0|e^{2M_2T}+2M_2e^{2M_2T}\frac{T}{\delta}\left(\delta+\|\mathbb{B}\|_{2\alpha,[0,T]}\delta^{2\alpha}
    +\|B\|_{\alpha,[0,T]}\|R^{\sigma(X^\varepsilon)}\|_{2\alpha,[s,t]}
      \delta^{3\alpha}+\|B\|_{\alpha,[0,T]}\delta^\alpha\right),
    \end{aligned}
  \end{equation}
where we used the fact $(1+\frac{x}{N})^N\le e^x$ for $N\ge 0$. We estimate the term $\|R^{\sigma(X^\varepsilon)}\|_{2\alpha,[s,t]}$. By the definition of $R^{\sigma(X^\varepsilon)}$ and the Taylor expansion, we have
\begin{equation*}
  \begin{aligned}
    R^{\sigma(X^\varepsilon)}_{s,t}&=\sigma(X^\varepsilon_t)-\sigma(X^\varepsilon_s)-\varepsilon\nabla_x\sigma(X^\varepsilon_s)\sigma(X^\varepsilon_s)B_{s,t}\\
    &=\nabla_x\sigma(X_s^\varepsilon)X_{s,t}^\varepsilon+\int_0^1(1-\eta)\nabla_x^2\sigma(X^\varepsilon_s+\eta X^\varepsilon_{s,t})\langle X^\varepsilon_{s,t},X^\varepsilon_{s,t}\rangle d\eta-\varepsilon\nabla_x\sigma(X^\varepsilon_s)\sigma(X^\varepsilon_s)B_{s,t}
    \\
    &=\nabla_x\sigma(X_s^\varepsilon)R^{X^\varepsilon}_{s,t}+\int_0^1(1-\eta)\nabla_x^2\sigma(X^\varepsilon_s+\eta X^\varepsilon_{s,t})\langle X^\varepsilon_{s,t},X^\varepsilon_{s,t}\rangle d\eta,
  \end{aligned}
\end{equation*}
where $\nabla_x^2\sigma(x)\langle y,z\rangle=\sum_{i,j=1,\dots,d}\partial_{x_i}\partial_{x_j}\sigma(x)y_iz_j$. By Assumption \ref{(A2)}, it follows that
\begin{equation}
    \label{Rfh}
  \begin{aligned}
      \|R^{\sigma(X^\varepsilon)}\|_{2\alpha,\delta}
      \le \frac{\|\nabla_x^2\sigma\|_\infty}{2}\|X^\varepsilon\|_{\alpha,\delta}^2+\|\nabla_x\sigma\|\|R^{X^\varepsilon}\|_{2\alpha,\delta}.
  \end{aligned}
\end{equation}
Combining to \eqref{Rh}, \eqref{5} and \eqref{Rfh}, we have
\begin{equation*}
  \begin{aligned}
    &\|R^{X^\varepsilon}\|_{2\alpha,\delta}\le  M_1\biggl\{\left(\|B\|_{\alpha,[0,T]}  \|R^{\sigma(X^\varepsilon)}\|_{2\alpha,\delta}+\|\mathbb{B}\|_{2\alpha,[0,T]}
    \|X^\varepsilon\|_{\alpha,\delta}\right)\delta^{\alpha}\\
    &\hspace{200pt}+\|\mathbb{B}\|_{2\alpha,[0,T]}+\left(1+\|X^\varepsilon\|_{\infty,[0,T]}\right)\delta^{1-2\alpha}\biggl\}
    \\
    &\le M_1\biggl\{ \left(\|B\|_{\alpha,[0,T]}\|R^{\sigma(X^\varepsilon)}\|_{2\alpha,\delta}+\|\mathbb{B}\|_{2\alpha,[0,T]}
    \|X^\varepsilon\|_{\alpha,\delta}\right)\delta^{\alpha}+\|\mathbb{B}\|_{2\alpha,[0,T]}+\delta^{1-2\alpha}(1+|X_0|e^{2M_2T})
    \\
    &\hspace{50pt}+2M_2e^{2M_2T}T\left(\delta^{1-2\alpha}+\|\mathbb{B}\|_{2\alpha,[0,T]}
  +\|B\|_{\alpha,[0,T]}\|R^{\sigma(X^\varepsilon)}\|_{2\alpha,[s,t]}
    \delta^{\alpha}+\|B\|_{\alpha,[0,T]}\delta^{-\alpha}\right)\biggl\}
    \\
    &\le M_1\Biggl\{\left(\|B\|_{\alpha,[0,T]}\left(\frac{\|\nabla_x^2\sigma\|_\infty}{2}\|X^\varepsilon\|_{\alpha,\delta}^2+\|\nabla_x\sigma\|_\infty\|R^{X^\varepsilon}\|_{2\alpha,\delta}\right)(1+2M_2e^{2M_2T}T)\right)\delta^\alpha
    \\
    &\hspace{150pt}+\|\mathbb{B}\|_{2\alpha,[0,T]}
    \|X^\varepsilon\|_{\alpha,\delta}\delta^\alpha+\|\mathbb{B}\|_{2\alpha,[0,T]}(1+2M_2e^{2M_2T}T)
\\
&\hspace{150pt}+\left(1+|X_0|e^{2M_2T}+2M_2e^{2M_2T}T\right)+2M_2e^{2M_2T}T
    \|B\|_{\alpha,[0,T]}\delta^{-\alpha}\Biggl\},
  \end{aligned}
\end{equation*}
and by \eqref{h}
\begin{equation}
  \label{RX}
  \begin{aligned}
    &\|R^{X^\varepsilon}\|_{2\alpha,\delta}\le 2M_1\biggl\{\left(\|B\|_{\alpha,[0,T]}\frac{\|\nabla_x^2\sigma\|_\infty}{2}\|X^\varepsilon\|_{\alpha,\delta}^2(1+2M_2e^{2M_2T}T)+\|\mathbb{B}\|_{2\alpha,[0,T]}
    \|X^\varepsilon\|_{\alpha,\delta}\right)\delta^\alpha
    \\
    &+\|\mathbb{B}\|_{2\alpha,[0,T]}(1+2M_2e^{2M_2T}T)+\left(1+|X_0|e^{2M_2T}+2M_2e^{2M_2T}T\right)
    +2M_2e^{2M_2T}T
    \|B\|_{\alpha,[0,T]}\delta^{-\alpha}\biggl\}.
  \end{aligned}
\end{equation}
From the relation $X_{s,t}^\varepsilon=\sigma(X_s^\varepsilon)B_{s,t}+R^{X^\varepsilon}_{s,t}$, we have
\begin{equation}
  \label{4}
  \begin{aligned}
    &\|X^\varepsilon\|_{\alpha,[s,t]}\le\|B\|_{\alpha,[0,T]}+\|R^{X^\varepsilon}\|_{2\alpha,\delta}\delta^{\alpha}
    \\    &\le \|B\|_{\alpha,[0,T]}(1+4M_1M_2e^{2M_2T}T)\\
    &\hspace{20pt}+2M_1\biggl\{\left(\|B\|_{\alpha,[0,T]}\frac{\|\nabla_x^2\sigma\|_\infty}{2}\|X^\varepsilon\|_{\alpha,\delta}^2(1+2M_2e^{2M_2T}T)+\|\mathbb{B}\|_{2\alpha,[0,T]}
    \|X^\varepsilon\|_{\alpha,\delta}\right)\delta^{2\alpha}
    \\
    &\hspace{20pt}+\|\mathbb{B}\|_{2\alpha,[0,T]}\delta^\alpha(1+2M_2e^{2M_2T}T)+\left(1+|X_0|e^{2M_2T}+2M_2e^{2M_2T}T\right)\biggl\}.
  \end{aligned}
\end{equation}
By \eqref{h}, there exists a constant $C>0$ such that
\begin{equation*}
  \begin{aligned}
      \|X^\varepsilon\|_{\alpha,\delta}\le  C\left(1+\|X^\varepsilon\|_{\alpha,\delta}^2\delta^\alpha+\|B\|_{\alpha,[0,T]}+\|\mathbb{B}\|_{2\alpha,[0,T]}^{1/2}\right).
  \end{aligned}
\end{equation*}
Multiplying both sides by $C\delta^\alpha$, we obtain that
\begin{equation}
  \label{6}
  \|X^\varepsilon\|_{\alpha,\delta}\lesssim1+\|(B,\mathbb{B})\|_{\alpha,[0,T]}.
\end{equation}
Applying Lemma \ref{HHE}, we conclude that
\begin{equation*}
  \|X^\varepsilon\|_{\alpha,[0,T]}\lesssim\left(1+\|(B,\mathbb{B})\|_{\alpha,[0,T]}\right)\vee\left(1+\|(B,\mathbb{B})\|_{\alpha,[0,T]}^{1/\alpha}\right)
  \lesssim1+\|(B,\mathbb{B})\|_{\alpha,[0,T]}^{1/\alpha}.
\end{equation*}
By combining \eqref{RX}, \eqref{6}, Lemma \ref{HHE} and \eqref{Rfh}, \eqref{6}, Lemma \ref{HHE}, we can derive \eqref{RLX} and \eqref{RLSX} respectively and we complete the proof of Lemma \ref{HE}.
 \end{proof}
 \section{Construction of the maximum likelihood type estimator}
 In this section, we construct the maximum likelihood type estimator from observed data $\{X_t^\varepsilon\}_{t\in[0,T]}$. We impose some further assumptions to the function $\sigma$. Let $A(x):=[\sigma\sigma^*](x),~A^\prime(x)=[\sigma^*\sigma](x)$.
  \begin{assumption}
    \label{A3}
   \begin{equation*}
     \inf_{x\in\mathbb{R}^d}\mathrm{det}A(x)>0, ~\inf_{x\in\mathbb{R}^d}\mathrm{det}A^\prime(x)>0.
   \end{equation*}
 \end{assumption}
 \begin{assumption}
   \label{sym}
   For every $1\le i\le d,~1\le j\le r$ and $x\in\mathbb{R}^d$,
   \begin{equation*}
      \sum_{k=1}^d\frac{\partial\sigma^{(i,j)}}{\partial x_k}(x)\sigma^{(k,l)}(x)=\sum_{k=1}^d\frac{\partial\sigma^{(i,l)}}{\partial x_k}(x)\sigma^{(k,j)}(x).
   \end{equation*}
 \end{assumption}
 We first recall the basic definitions of fractional calculus. Let $f\in L^1(a,b)$ for $a<b$ and $\alpha>0$. The fractional Riemann--Liouville integrals of $f$ of order $\alpha$ are defined for almost all $x\in(a,b)$ by
\begin{equation*}
 I_{a+}^{\alpha}f(x):=\frac{1}{\Gamma(\alpha)}\int_a^x(x-y)^{\alpha-1}f(y)dy,
\end{equation*}
and
\begin{equation*}
 I_{b-}^{\alpha}f(x):=\frac{1}{\Gamma(\alpha)}\int_x^b(y-x)^{\alpha-1}f(y)dy.
\end{equation*}
 For $1\le i\le r$, define the function $Q_{H,\theta}^\varepsilon(t)=\left(  Q_{H_1,\theta}^\varepsilon(t),\dots,Q_{H_r,\theta}^\varepsilon(t)\right)$ by
\begin{equation*}
  Q_{H_i,\theta}^\varepsilon(t):=
    \left(\varepsilon d_{H_i}\right)^{-1}t^{H_i-1/2}I_{0+}^{1/2-H_i}\left[(\cdot)^{1/2-H_i}\left(\sigma^*(X_\cdot^\varepsilon)A^{-1}(X_\cdot^\varepsilon)b(X^\varepsilon_\cdot,\theta)\right)^i\right](t),
\end{equation*}
where
\begin{equation*}
  d_{H_i}:=\sqrt{\frac{2H_i\Gamma(\frac{3}{2}-H_i)\Gamma(H_i+\frac{1}{2})}{\Gamma(2-2H_i)}}.
\end{equation*}
 For $0<s<t$, let $k_{H_i}(t,s)$ and $k_{H_i}^{-1}(t,s)$ be functins given by
 \begin{equation*}
   \begin{aligned}
       k_{H_i}(t,s):=&d_{H_i}s^{1/2-H_i}D_{t-}^{1/2-H_i}\left[(\cdot)^{H_i-1/2}\right]\\
       k_{H_i}^{-1}(t,s):=&\frac{1}{d_{H_i}}s^{1/2-H_i}I_{t-}^{1/2-H_i}\left[(\cdot)^{H_i-1/2}\right].
   \end{aligned}
 \end{equation*}
We transform the solution of \eqref{SDE} to a fractional Brownian motion with drift.
\begin{lemma}
  \label{transform Y}
  Let $\mathcal{P}$ be the partition of the segment $[0,t]$. Then under Assumptions \ref{(A1)}-\ref{sym},
  \begin{equation*}
    \begin{aligned}
        Y_t:=&\lim_{|\mathcal{P}|\rightarrow 0}\sum_{t_k,t_{k+1}\in\mathcal{P}}\varepsilon^{-1}\biggl(\sigma^*(X_{t_k}^\varepsilon)A^{-1}(X_{t_k}^\varepsilon)X_{t_k,t_{k+1}}^\varepsilon+\frac{1}{2}\nabla_x\left(\sigma^*(X_{t_k}^\varepsilon)A^{-1}
        (X_{t_k}^\varepsilon)\right)(X^\varepsilon_{t_k,t_{k+1}})^{\otimes2}\biggl)
      \\
      =&\varepsilon^{-1}\int_0^t\sigma^*(X_s^\varepsilon)A^{-1}(X_s^\varepsilon)b(X_s^\varepsilon,\theta_0)ds+B_t.
    \end{aligned}
  \end{equation*}
\end{lemma}
\begin{proof}
 Elementary computations yield
  \begin{equation*}
    \begin{aligned}
    Y_t=&\lim_{|\mathcal{P}|\rightarrow 0}\sum_{t_k,t_{k+1}\in\mathcal{P}}\varepsilon^{-1}\biggl(\sigma^*(X_{t_k}^\varepsilon)A^{-1}(X_{t_k}^\varepsilon)X_{t_k,t_{k+1}}^\varepsilon+\frac{1}{2}\nabla_x\left(\sigma^*(X_{t_k}^\varepsilon)A^{-1}
    (X_{t_k}^\varepsilon)\right)(X^\varepsilon_{t_k,t_{k+1}})^{\otimes2}\biggl)
    \\
    &=\lim_{|\mathcal{P}|\rightarrow 0}\sum_{t_k,t_{k+1}\in\mathcal{P}}\varepsilon^{-1}\biggl(\sigma^*(X_{t_k}^\varepsilon)A^{-1}(X_{t_k}^\varepsilon)\left\{\int_{t_k}^{t_{k+1}}b(X_s^\varepsilon,\theta_0)ds
    +\varepsilon\int_{t_k}^{t_{k+1}}\sigma(X_s^\varepsilon)dB_s\right\}
    \\
    &\hspace{200pt}+\frac{1}{2}\nabla_x\left(\sigma^*(X_{t_k}^\varepsilon)A^{-1}
    (X_{t_k}^\varepsilon)\right)(X_{t_k,t_{k+1}})^{\otimes2}\biggl)
    \\
    &=\varepsilon^{-1}\int_0^t\sigma^*(X_s^\varepsilon)A^{-1}(X_s^\varepsilon)b(X_s^\varepsilon,\theta_0)ds+B_t
    \\
    &+\varepsilon^{-1}\lim_{|\mathcal{P}|\rightarrow 0}\sum_{t_k,t_{k+1}\in\mathcal{P}}\biggl(\varepsilon\sigma^*(X_{t_k}^\varepsilon)
    A^{-1}(X_{t_k}^\varepsilon)\int_{t_k}^{t_{k+1}}\sigma(X_s^\varepsilon)dB_s-\varepsilon\sigma^*(X_{t_k}^\varepsilon)
    A^{-1}(X_{t_k}^\varepsilon)\sigma(X_{t_k}^\varepsilon)B_{t_k,t_{k+1}}
\\
    &\hspace{200pt}+\frac{1}{2}\nabla_x\left(\sigma^*(X_{t_k}^\varepsilon)A^{-1}(X_{t_k}^\varepsilon)\right)(X^\varepsilon_{t_k,t_{k+1}})^{\otimes2}\biggl).
    \end{aligned}
  \end{equation*}
  We show that the limit of the third term tends to zero. Note that
  \begin{equation*}
    \begin{aligned}
      \nabla_x\left(\sigma^*(x)A^{-1}(x)\right)&=\nabla_x\left(\sigma^*(x)\right)A^{-1}(x)+\sigma^*(x)\nabla_xA^{-1}(x)
      \\
      &=\nabla_x\left(\sigma^*(x)\right)A^{-1}(x)-\sigma^*(x)A^{-1}(x)\left(\nabla_x\left(\sigma(x)\right)\sigma^*(x)+\sigma(x)\nabla_x\sigma^*(x)\right)A^{-1}(x)
      \\
      &=-\sigma^*(x)A^{-1}(x)\nabla_x\left(\sigma(x)\right)\sigma^*(x)A^{-1}(x),~~~x\in\mathbb{R}^d.
    \end{aligned}
  \end{equation*}
  From Proposition \ref{rie}, it follows that
  \begin{equation}
    \label{te}
    \begin{aligned}
      X^\varepsilon_{t_k,t_{k+1}}&=\int_{t_k}^{t_{k+1}}b(X_s^\varepsilon,\theta_0)ds+\varepsilon\int_{t_k}^{t_{k+1}}\sigma(X_s^\varepsilon)dB_s
      \\
      &=\varepsilon\sigma(X_{t_k}^\varepsilon)B_{t_k,t_{k+1}}+\varepsilon^2\nabla_x\sigma(X_{t_k}^\varepsilon)\sigma(X_{t_k}^\varepsilon)\mathbb{B}_{t_k,t_{k+1}}+O(|t_{k+1}-t_k|).
    \end{aligned}
  \end{equation}
  Calculating the tendor product $(X^\varepsilon_{t_k,t_{k+1}})^{\otimes2}$ with the relation \eqref{te}, we have
  \begin{equation*}
    \begin{aligned}
      (X^\varepsilon_{t_k,t_{k+1}})^{\otimes2}=&\left(\varepsilon\sigma(X_{t_k}^\varepsilon)B_{t_k,t_{k+1}}\right)^{\otimes 2}+\left(\varepsilon^2\nabla_x\sigma(X_{t_k}^\varepsilon)\sigma(X_{t_k}^\varepsilon)\mathbb{B}_{t_k,t_{k+1}}\right)^{\otimes 2}
      \\
      &+\varepsilon^3\sigma(X_{t_k}^\varepsilon)B_{t_k,t_{k+1}}\otimes\nabla_x\sigma(X_{t_k}^\varepsilon)\sigma(X_{t_k}^\varepsilon)\mathbb{B}_{t_k,t_{k+1}}
      \\
      &+\varepsilon^3\nabla_x\sigma(X_{t_k}^\varepsilon)\sigma(X_{t_k}^\varepsilon)\mathbb{B}_{t_k,t_{k+1}}\otimes\sigma(X_{t_k}^\varepsilon)B_{t_k,t_{k+1}}+O\left(|t_{k+1}-t_k|^{1+H}\right)
      \\
      &=\left(\varepsilon\sigma(X_{t_k}^\varepsilon)B_{t_k,t_{k+1}}\right)^{\otimes 2}+O\left(|t_{k+1}-t_k|^{3H}\right).
    \end{aligned}
  \end{equation*}
  Therefore,
  \begin{equation*}
    \begin{aligned}
      &\varepsilon^{-1}\biggl(\varepsilon\sigma^*(X_{t_k}^\varepsilon)
      A^{-1}(X_{t_k}^\varepsilon)\int_{t_k}^{t_{k+1}}\sigma(X_s^\varepsilon)dB_s-\varepsilon\sigma^*(X_{t_k}^\varepsilon)
      A^{-1}(X_{t_k}^\varepsilon)\sigma(X_{t_k}^\varepsilon)B_{t_k,t_{k+1}}
  \\
      &\hspace{200pt}+\frac{1}{2}\nabla_x\left(\sigma^*(X_{t_k}^\varepsilon)A^{-1}(X_{t_k}^\varepsilon)\right)(X^\varepsilon_{t_k,t_{k+1}})^{\otimes2}\biggl)
      \\
      &=\sigma^*(X_{t_k}^\varepsilon)
      A^{-1}(X_{t_k}^\varepsilon)\biggl(\int_{t_k}^{t_{k+1}}\sigma(X_s^\varepsilon)dB_s-\sigma(X_{t_k}^\varepsilon)B_{t_k,t_{k+1}}
      \\
      &\hspace{100pt}-\frac{\varepsilon}{2}\nabla_x\left(\sigma(X_{t_k}^\varepsilon)\right)\sigma^*(X_{t_k}^\varepsilon)
          A^{-1}(X_{t_k}^\varepsilon)\left(\sigma\left(X_{t_k}^\varepsilon\right)B_{t_k,t_{k+1}}\right)^{\otimes2}\biggl)+O\left(|t_{k+1}-t_k|^{3H}\right)
            \\
      &=\sigma^*(X_{t_k}^\varepsilon)
      A^{-1}(X_{t_k}^\varepsilon)\left(\int_{t_k}^{t_{k+1}}\sigma(X_s^\varepsilon)dB_s-\sigma(X_{t_k}^\varepsilon)B_{t_k,t_{k+1}}-\varepsilon\nabla_x\left(\sigma(X_{t_k}^\varepsilon)\right)\sigma(X_{t_k}^\varepsilon)\mathbb{B}_{t_k,t_{k+1}}\right)
      \\
      &\hspace{50pt}-\varepsilon\sigma^*(X_{t_k}^\varepsilon)
        A^{-1}(X_{t_k}^\varepsilon)\nabla_x\left(\sigma(X_{t_k}^\varepsilon)\right)\sigma(X_{t_k}^\varepsilon)\left(\frac{1}{2}(B_{t_k,t_{k+1}})^{\otimes 2}-\mathbb{B}_{t_k,t_{k+1}}\right)+O\left(|t_{k+1}-t_k|^{3H}\right).
    \end{aligned}
  \end{equation*}
  Since $(B,\mathbb{B})$ is a geometirc rough path, we have ${\rm Sym}(\mathbb{B}_{t_k,t_{k+1}})=\frac{1}{2}(B_{t_k,t_{k+1}})$ and $\frac{1}{2}(B_{t_k,t_{k+1}})^{\otimes 2}-\mathbb{B}_{t_k,t_{k+1}}={\rm Anti}(\mathbb{B}_{t_k,t_{k+1}})$
  where ${\rm Anti}(\mathbb{B}_{t_k,t_{k+1}})=\frac{1}{2}(\mathbb{B}-\mathbb{B}^*)$ is the antisymmetric part of $\mathbb{B}$. Note that for $1\le i\le d$
  \begin{equation*}
    \begin{aligned}
      \left(\nabla_x\left(\sigma(X_{t_k}^\varepsilon)\right)\sigma(X_{t_k}^\varepsilon){\rm Anti}(\mathbb{B}_{t_k,t_{k+1}})\right)^{(i)}&=\sum_{j,k,l,m}\frac{\partial \sigma^{(i,l)}(X_{t_k}^\varepsilon)}{\partial x_j}
      \sigma^{(j,m)}(X_{t_k}^\varepsilon){\rm Anti}(\mathbb{B}_{t_k,t_{k+1}})^{(l,m)}
      \\
      &=-\sum_{j,k,l,m}\frac{\partial \sigma^{(i,l)}(X_{t_k}^\varepsilon)}{\partial x_j}
      \sigma^{(j,m)}(X_{t_k}^\varepsilon){\rm Anti}(\mathbb{B}_{t_k,t_{k+1}})^{(m,l)}.
    \end{aligned}
  \end{equation*}
  By using Assumption \ref{sym}, we can deduce $  \nabla_x\left(\sigma(X_{t_k}^\varepsilon)\right)\sigma(X_{t_k}^\varepsilon){\rm Anti}(\mathbb{B}_{t_k,t_{k+1}})=0$ and
  \begin{equation*}
    \begin{aligned}
      \varepsilon^{-1}\sum_{t_k,t_{k+1}\in\mathcal{P}}\biggl(\varepsilon\sigma^*(X_{t_k}^\varepsilon)
      A^{-1}(X_{t_k}^\varepsilon)&\int_{t_k}^{t_{k+1}}\sigma(X_s^\varepsilon)dB_s-\varepsilon\sigma^*(X_{t_k}^\varepsilon)
      A^{-1}(X_{t_k}^\varepsilon)\sigma(X_{t_k}^\varepsilon)B_{t_k,t_{k+1}}
      \\
      &+\frac{1}{2}\nabla_x\left(\sigma^*(X_{t_k}^\varepsilon)A^{-1}(X_{t_k}^\varepsilon)\right)(X^\varepsilon_{t_k,t_{k+1}})^{\otimes2}\biggl)
      =O\left(|\mathcal{P}|^{3H-1}\right).
    \end{aligned}
  \end{equation*}
  Passing to the limit as $|\mathcal{P}|\rightarrow 0$, we complete the proof of Lemma \ref{transform Y}.
\end{proof}


Define a stochastic process $W_t=(W_t^1,\dots,W_t^r)$ by
\begin{equation*}
  W_t^{i}:=\int_0^tk_{H_i}^{-1}(t,s)dB_s^i,~~~1\le i\le r.
\end{equation*}
 Here we interpret the stochastic integral with respect to a fractional Brownian motion as a Wiener integral and then we can find that $\{W_t\}_{0\le t\le T}$ is r--dimensional Wiener process.
We define a semimartingale $Z_t=\left(Z_t^1,\dots,Z_t^r\right)$ as follows:
\begin{equation*}
 \begin{aligned}
   Z_t^i:&=\int_0^Tk_{H_i}^{-1}(t,s)dY_s^i
   =\int_0^tQ_{H_i,\theta_0}^\varepsilon(s)ds+W_t^i,~~~1\le i\le r.
 \end{aligned}
\end{equation*}
 Let $\tilde{P}$ be a probability measure defined by $d\tilde{P}/dP=\xi_T^{\theta_0}$ where
\begin{equation*}
  \xi_T^{\theta_0}=\exp\left(-\int_0^T\left(Q_{H,\theta_0}^\varepsilon(t)\right)^*dW_t-\frac{1}{2}\int_0^T\left(Q_{H,\theta_0}^\varepsilon(t)\right)^*Q_{H,\theta_0}^\varepsilon(t)dt\right).
\end{equation*}
Now, let us assume that
\begin{align}
E\xi_T^{\theta_0}=1, \label{density}
\end{align}
one of the sufficient conditions for which is given in Lemma \ref{absolute continuity} later.
By the standard Girsanov theorem, we obtain that the stochastic process $\{Z_t\}_{t\in[0,T]}$ is a r--dimensional $\mathcal{F}_t^{B}$-Brownian motion under $\tilde{P}$. Hence we see that
\begin{equation*}
\int_0^tk_{H_i}(t,s)dZ_t^i=Y_t^i.
\end{equation*}
 is a $(\mathcal{F}_t^{B})$ fractional Brownian motion under $\tilde{P}$.

Define an enhanced process of $Y$ as follows:
\begin{equation*}
  \begin{aligned}
    \mathbb{Y}_{s,t}&=\left(\mathbb{Y}_{s,t}^{i,j}\right)_{i,j=1}^r,
    \\
    \mathbb{Y}_{s,t}^{i,j}&\overset{L^2}{=}\lim_{|\mathcal{P}|\rightarrow 0}
    \sum_{t_k,t_{k+1}\in\mathcal{P}}Y_{s,t_k}^{i}Y_{t_k,t_{k+1}}^{j}
    \\
    &=\mathbb{B}^{i,j}_{s,t}+\varepsilon^{-2}\int_s^t\left(\int_s^v\left[\sigma^*(X_u^\varepsilon)A^{-1}(X_u^\varepsilon)b(X_u^\varepsilon,\theta_0)\right]^idu\right)\left[\sigma^*(X_v^\varepsilon)A^{-1}(X_v^\varepsilon)b(X_v^\varepsilon,\theta_0)\right]^{j}dv
    \\
    &~~~+\varepsilon^{-1}\int_s^tB_{s,v}^i\left[\sigma^*(X_v^\varepsilon)A^{-1}(X_v^\varepsilon)b(X_v^\varepsilon,\theta_0)\right]^{j}dv
    \\
    &~~~+\varepsilon^{-1}\int_s^t\left(\int_s^v\left[\sigma^*(X_u^\varepsilon)A^{-1}(X_u^\varepsilon)b(X_u^\varepsilon,\theta_0)\right]^idu\right)dB^{j}_v,~~~1\le i<j\le r,
    \\
    \mathbb{Y}_{s,t}^{i,i}&=\frac{1}{2}\left(Y^i_{s,t}\right)^2,~~~1\le i\le r,\\
    \mathbb{Y}_{s,t}^{i,j}&=-\mathbb{Y}^{j,i}_{s,t}+Y_{s,t}^iY_{s,t}^{j},~~~
    1\le j<i\le r.
  \end{aligned}
\end{equation*}
for $s,t\in[0,T]$, where $\mathcal{P}=\left\{s=t_0<t_1<\cdots<t_n=t\right\}$ is an arbitrary partition of the segment $[s,t]$. We remark that each integral appearing in the process $\mathbb{Y}$ are defined by the Young pathwise integral.
 By Proposition \ref{rfb} and algebraic identities for Riemann sums, we can show that  $(Y,\mathbb{Y})\in\mathscr{C}^\alpha([0,T],\mathbb{R}^r)$. In addition,
\begin{equation*}
  \begin{aligned}
    &\sigma(X_t^\varepsilon)-\sigma(X_s^\varepsilon)=\nabla_x\sigma(X_s^\varepsilon)X_{s,t}^\varepsilon+\int_0^1(1-\tau)\nabla_x^2\sigma(X^\varepsilon_s+\tau X_{s,t}^\varepsilon)\langle X^\varepsilon_{s,t},X^\varepsilon_{s,t}\rangle d\tau
    \\
    &=\varepsilon\nabla_x\sigma(X_s^\varepsilon)\sigma(X_s^\varepsilon)B_{s,t}+ \nabla_x\sigma(X_s^\varepsilon)\sigma(X_s^\varepsilon)\int_s^t\sigma^*(X_u^\varepsilon)A^{-1}(X_u^\varepsilon)b(X_u^\varepsilon,\theta_0)du+Y^\sharp_{s,t}
    \\
    &= \varepsilon\nabla_x\sigma(X_s^\varepsilon)\sigma(X_s^\varepsilon)Y_{s,t}+Y^\sharp_{s,t},
  \end{aligned}
\end{equation*}
where
\begin{equation*}
  \begin{aligned}
      Y^\sharp_{s,t}=\int_0^1(1-\tau)\nabla_x^2\sigma(X^\varepsilon_s+\tau X_{s,t}^\varepsilon)\langle X^\varepsilon_{s,t},X^\varepsilon_{s,t}\rangle d\tau+\nabla_x\sigma(X_s^\varepsilon)R^{X^\varepsilon}_{s,t}
      \\
      -\nabla_x\sigma(X_s^\varepsilon)\sigma(X_s^\varepsilon)\int_s^t\sigma^*(X_u^\varepsilon)A^{-1}(X_u^\varepsilon)b(X_u^\varepsilon,\theta_0)du.
  \end{aligned}
\end{equation*}
 Since
\begin{equation*}
\sup_{s,t\in[0,T]: s\ne t}\frac{|Y^\sharp_{s,t}|}{|t-s|^{2\alpha}}<\infty,
\end{equation*}
 we conclude that
  $\left(\sigma(X^\varepsilon),\varepsilon\nabla_x\sigma(X_s^\varepsilon)\sigma(X_s^\varepsilon)\right)\in\mathcal{D}^\alpha_{Y}([0,T],\mathbb{R}^r)$.
Therefore, we can define the rough integral $\int\sigma(X_s^\varepsilon)dY_s$ and have the following equality $\tilde{P}$-a.s.,
\begin{equation}
  \label{siy}
  \begin{aligned}
    &\varepsilon\int_0^t\sigma(X_s^\varepsilon)dY_s=\lim_{|\mathcal{P}|\rightarrow 0}\sum_{t_k,t_{k+1}\in\mathcal{P}}\varepsilon\left(\sigma(X_{t_k}^\varepsilon)Y_{t_k,t_{k+1}}
        +\varepsilon\nabla_x\sigma(X_s^\varepsilon)\sigma(X_s^\varepsilon)\mathbb{Y}_{t_k,t_{k+1}}\right)
        \\
        &=\lim_{|\mathcal{P}|\rightarrow 0}\sum_{t_k,t_{k+1}\in\mathcal{P}}\Biggl(\sigma(X_{t_k}^\varepsilon)\int_{t_k}^{t_{k+1}}\sigma^*(X_s^\varepsilon)A^{-1}(X_s^\varepsilon)b(X_s^\varepsilon,\theta_0)ds
        +\varepsilon\sigma(X_{t_k}^\varepsilon)B_{t_k,t_{k+1}}\\
        &\hspace{50pt}+\varepsilon^2 \nabla_x\sigma(X_{t_k}^\varepsilon)\sigma(X_{t_k}^\varepsilon)\mathbb{B}_{t_k,t_{k+1}}+\varepsilon^2 \nabla_x\sigma(X_s^\varepsilon)\sigma(X_s^\varepsilon)(\mathbb{Y}_{t_k,t_{k+1}}-\mathbb{B}_{t_k,t_{k+1}})\Biggl)
        \\
        &=\int_0^tb(X_s^\varepsilon,\theta_0)ds+\varepsilon\int_0^t\sigma(X^\varepsilon_s)dB_s.
  \end{aligned}
\end{equation}
Let $(X^{\varepsilon,0},\varepsilon\sigma(X^{\varepsilon,0}))\in\mathcal{D}_B^{2\alpha}([0,T],\mathbb{R}^d)$ be the solution to the following equation:
\begin{equation*}
 X_t^{\varepsilon,0}=X_0+\varepsilon\int_0^t\sigma(X_s^{\varepsilon,0})\,dB_s.
\end{equation*}
By \eqref{siy}, the process $\{X^{\varepsilon,0}\}_{t\in[0,T]}$ considered on the probability space $(\Omega,\mathcal{F},P)$, satisfies the same equation as the process $\{X_t^\varepsilon\}_{t\in[0,T]}$ on $(\Omega,\mathcal{F},\tilde{P})$.
 Let $P_{\theta_0}^\varepsilon$ and $P_0^\varepsilon$ be probability measures induced by processes $\{X^{\varepsilon}_t\}_{t\in[0,T]} $ and $\{X^{\varepsilon,0}_t\}_{t\in[0,T]}$, respectively.
Then for every $A\in\mathcal{B}(C[0,T])$,
\begin{equation*}
P_{0}^\varepsilon(A)=P(X^{\varepsilon,0}\in A)=\tilde{P}(X^{\varepsilon}\in A)=\int_{\left\{X^\varepsilon\in A\right\}}\xi_T^{\theta_0}(\omega)dP(\omega),
\end{equation*}
and by Lemma 6.8 in \cite{Liptser}, we find that the Radon-Nikodym derivative of $P^\varepsilon_{\theta_0}$ with respect to $P_0^\varepsilon$ is given by
\begin{equation}
  \label{RN}
  \frac{dP_{\theta_0}^\varepsilon}{dP_0^\varepsilon}(X^\varepsilon)=\exp\left(\int_0^T\left(Q_{H,\theta_0}^\varepsilon(t)\right)^*dW_t+\frac{1}{2}\int_0^T\left(Q_{H,\theta_0}^\varepsilon(t)
  \right)^*Q_{H,\theta_0}^\varepsilon(t)\,dt\right).
\end{equation}

Let us make remark for \eqref{RN}.
We have derived Radon--Nikodym derivative \eqref{RN} via the Girsanov theorem under the assumption \eqref{density}. However, to ensure the condition
\eqref{density}, we need a stronger assumption for coefficients $b$ and $\sigma$.
\begin{assumption}
  \label{ac}
  There exists a constant $C>0$ and $\lambda\in(0,h)$ such that for every $\theta\in\Theta$ and $y\in\mathbb{R}^d$,
  \begin{equation*}
    \left|\sigma^*(y)A^{-1}(y)b(y,\theta)\right|\le C(1+|y|^\lambda).
  \end{equation*}
\end{assumption}
\begin{lemma}
  \label{absolute continuity}
  Suppose that Assumptions \ref{(A1)}-\ref{ac} are satisfied. Then the stochastic process $\{\Lambda_t^{\theta_0}\}_{t\in[0,T]}$ is a martingale.
\end{lemma}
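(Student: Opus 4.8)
The plan is to recognize $\{\Lambda_t^{\theta_0}\}_{t\in[0,T]}$ as the stochastic (Dol\'eans--Dade) exponential
\[
\Lambda_t^{\theta_0}=\exp\left(-\int_0^t\left(Q_{H,\theta_0}^\varepsilon(s)\right)^*dW_s-\frac{1}{2}\int_0^t\left(Q_{H,\theta_0}^\varepsilon(s)\right)^*Q_{H,\theta_0}^\varepsilon(s)\,ds\right)
\]
of the $\mathcal{F}_t$-adapted integrand $-Q_{H,\theta_0}^\varepsilon$ driven by the $r$-dimensional Wiener process $\{W_t\}$ under $P$. As such it is always a nonnegative local martingale, hence a supermartingale; to upgrade this to a genuine martingale I would verify Novikov's criterion, namely that $E\exp\!\left(\tfrac{1}{2}\int_0^T|Q_{H,\theta_0}^\varepsilon(t)|^2dt\right)<\infty$.

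First I would bound the integrand pointwise. Writing out $Q_{H_i,\theta_0}^\varepsilon(t)$ with the kernel $(t-s)^{-1/2-H_i}$ of $I_{0+}^{1/2-H_i}$, using the Beta-integral identity $\int_0^t(t-s)^{-1/2-H_i}s^{1/2-H_i}\,ds=c_{H_i}\,t^{1-2H_i}$ for a finite constant $c_{H_i}$, and applying Assumption \ref{ac}, I expect an estimate of the form
\[
\left|Q_{H_i,\theta_0}^\varepsilon(t)\right|\lesssim \varepsilon^{-1}t^{1/2-H_i}\left(1+\|X^\varepsilon\|_{\infty,[0,T]}^\lambda\right).
\]
Squaring and integrating in $t$ (the exponent $1-2H_i>0$ since $H_i<1/2$) then gives
\[
\frac{1}{2}\int_0^T\left|Q_{H,\theta_0}^\varepsilon(t)\right|^2dt\lesssim \varepsilon^{-2}\left(1+\|X^\varepsilon\|_{\infty,[0,T]}^{2\lambda}\right).
\]
Invoking the apriori bound of Lemma \ref{HE} through $\|X^\varepsilon\|_{\infty,[0,T]}\le |X_0|+\|X^\varepsilon\|_{\alpha,[0,T]}T^\alpha\lesssim 1+\|(B,\mathbb{B})\|_{\alpha,[0,T]}^{1/\alpha}$ reduces the exponent, for each fixed $\varepsilon\in(0,1]$, to a constant multiple of $1+\|(B,\mathbb{B})\|_{\alpha,[0,T]}^{2\lambda/\alpha}$.

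The decisive point, and the main obstacle, is the exponential integrability of this rough-path norm. Because Assumption \ref{ac} supplies $\lambda\in(0,h)$ and Proposition \ref{rfb} lets me take $\alpha\in(\max\{1/3,\lambda\},h)$, the power $\gamma:=2\lambda/\alpha$ can be arranged to satisfy $\gamma<2$. I would then appeal to the Gaussian-type (Fernique / Cass--Litterer--Lyons) integrability of the homogeneous rough-path norm $\|(B,\mathbb{B})\|_{\alpha,[0,T]}$: its tails are Gaussian, so that $E\exp\!\left(c\,\|(B,\mathbb{B})\|_{\alpha,[0,T]}^{\gamma}\right)<\infty$ for every $c>0$ whenever $\gamma<2$, the large prefactor $\varepsilon^{-2}$ being harmless for fixed $\varepsilon$. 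Applying this with $\gamma=2\lambda/\alpha$ yields finiteness of the exponential moment, Novikov's condition holds, and $\{\Lambda_t^{\theta_0}\}$ is a martingale; in particular $E\xi_T^{\theta_0}=E\Lambda_T^{\theta_0}=1$, which is exactly \eqref{density}. The care needed is twofold: making the fractional-integral estimate sharp enough that the growth exponent in $\|X^\varepsilon\|_{\infty,[0,T]}$ is precisely $\lambda$, and matching the threshold $\gamma<2$ to the Fernique exponent. The restriction $\lambda<h$ is exactly what renders these two requirements compatible.
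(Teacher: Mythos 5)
Your proposal is correct and follows essentially the same route as the paper: both reduce the martingale property to an exponential moment of $\left|Q_{H,\theta_0}^\varepsilon\right|^2$, bound it via Assumption \ref{ac} and the Beta integral by a constant times $1+\|X^\varepsilon\|_{\infty,[0,T]}^{2\lambda}$, invoke the apriori estimate of Lemma \ref{HE} with $\lambda<\alpha<h$ so that the resulting exponent $2\lambda/\alpha$ is strictly less than $2$, and close with Fernique's theorem for Gaussian rough paths (Theorem 11.9 in \cite{Fritz}). The only cosmetic difference is that you verify Novikov's criterion with the full time integral $\int_0^T\left|Q_{H,\theta_0}^\varepsilon(t)\right|^2dt$, whereas the paper checks the standard sufficient condition $\sup_{t\in[0,T]}E\exp\left(M\left|Q_{H,\theta}^\varepsilon(t)\right|^2\right)<\infty$; both are closed by the same estimates.
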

\begin{proof}
  It is enough to show that there exists a constant $M>0$ such that
  \begin{equation}
    \sup_{t\in[0,T]}E\exp\left(M\left(Q_{H,\theta}^\varepsilon(t)\right)^*Q_{H,\theta}^\varepsilon(t)\right)<\infty.
  \end{equation}
  By using Lemma \ref{HE} with $0<\lambda<\alpha<h$, there exist constants $M_1,M_2>0$ such that
  \begin{equation*}
    \begin{aligned}
          &\exp\left(M\left(Q_{H,\theta}^\varepsilon(t)\right)^*Q_{H,\theta}^\varepsilon(t)\right)
\\
          &=\exp\left(M\sum_{i=1}^r\left(\varepsilon d_{H_i}\right)^{-1}t^{2H_i-1}\left(\int_0^ts^{1/2-H_i}(t-s)^{-1/2-H_i}\left[\sigma^*(X_s^\varepsilon)A^{-1}(X_s^\varepsilon)b(X_s^\varepsilon,\theta_0)\right]_ids\right)^2\right)
          \\
          &\le\exp\left(M_1\left(1+\|X^\varepsilon\|_{\infty,[0,T]}^{2\lambda}\right)\sum_{i=1}^rt^{2H_i-1}\left(\int_0^ts^{1/2-H_i}(t-s)^{-1/2-H_i}ds\right)^2\right)\\
          &\le\exp\left(M_2\left(1+\|(B,\mathbb{B})\|_{\alpha,[0,T]}^{2\lambda/\alpha}\right)\sum_{i=1}^rt^{1-2H_i}\beta(3/2-H_i,1/2-H_i)^2\right),
    \end{aligned}
  \end{equation*}
  and by Fernique's theorem for Gaussian rough path (see Theorem 11.9 in \cite{Fritz}), we complete the proof of Lemma \ref{absolute continuity}.
\end{proof}
To summarize the discussion, the following theorem holds.
\begin{theorem}
  \label{ABC}
  Suppose that Assumptions \ref{(A1)}-\ref{ac} are satisfied. Then the probability measures $P_{\theta_0}^\varepsilon$ and $P_0^\varepsilon$ are equivalent and
  \begin{equation*}
    \frac{dP_{\theta_0}^\varepsilon}{dP_0^\varepsilon}(X^\varepsilon)=\exp\left(\int_0^T\left(Q_{H,\theta_0}^\varepsilon(t)\right)^*dZ_t-\frac{1}{2}\int_0^T\left(Q_{H,\theta_0}^\varepsilon(t)
    \right)^*Q_{H,\theta_0}^\varepsilon(t)\,dt\right).
  \end{equation*}
\end{theorem}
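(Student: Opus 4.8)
The plan is to treat Theorem \ref{ABC} as a consolidation of the preceding discussion: the analytic heart of the matter, namely the Novikov-type exponential integrability ensuring $E\xi_T^{\theta_0}=1$, has already been settled in Lemma \ref{absolute continuity}, so what remains is to upgrade the provisional derivative \eqref{RN} to an unconditional statement and then to perform a single change of integrator from $W$ to $Z$. I would first invoke Lemma \ref{absolute continuity} to conclude that, under Assumptions \ref{(A1)}--\ref{ac}, the density process is a genuine (not merely local) martingale starting from $1$, whence $E\xi_T^{\theta_0}=1$ and the hypothesis \eqref{density} under which \eqref{RN} was derived is now fulfilled. Since $\xi_T^{\theta_0}>0$ almost surely with expectation one, the measure $\tilde P$ defined by $d\tilde P/dP=\xi_T^{\theta_0}$ is equivalent to $P$; pushing this forward along the map $\omega\mapsto X^\varepsilon$ and using the identity $P_0^\varepsilon(A)=\tilde P(X^\varepsilon\in A)$ established before \eqref{RN}, the induced laws $P_{\theta_0}^\varepsilon$ and $P_0^\varepsilon$ are mutually absolutely continuous, with positive and finite density given by \eqref{RN}.

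Next I would rewrite the exponent of \eqref{RN} in terms of $Z$. By the construction of the semimartingale $Z$ we have $Z_t^i=\int_0^tQ_{H_i,\theta_0}^\varepsilon(s)\,ds+W_t^i$, so that $dW_t=dZ_t-Q_{H,\theta_0}^\varepsilon(t)\,dt$. Substituting this into the stochastic integral in \eqref{RN} yields
\begin{equation*}
\int_0^T\left(Q_{H,\theta_0}^\varepsilon(t)\right)^*dW_t=\int_0^T\left(Q_{H,\theta_0}^\varepsilon(t)\right)^*dZ_t-\int_0^T\left(Q_{H,\theta_0}^\varepsilon(t)\right)^*Q_{H,\theta_0}^\varepsilon(t)\,dt.
\end{equation*}
Adding the quadratic term $\frac{1}{2}\int_0^T(Q_{H,\theta_0}^\varepsilon(t))^*Q_{H,\theta_0}^\varepsilon(t)\,dt$ already present in \eqref{RN} collapses the two $dt$-integrals into a single term carrying the coefficient $-\frac{1}{2}$, which is precisely the exponent appearing in the statement of the theorem. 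This completes the identification of the density.

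The only step that genuinely requires care—the closest thing to an obstacle—is justifying that the change of integrator $dW=dZ-Q\,dt$ is legitimate and that both $\int_0^T(Q_{H,\theta_0}^\varepsilon)^*Q_{H,\theta_0}^\varepsilon\,dt$ and the Itô integral $\int_0^T(Q_{H,\theta_0}^\varepsilon)^*dZ$ are well defined. For this one needs $Q_{H,\theta_0}^\varepsilon\in L^2([0,T])$ almost surely and the drift $\int_0^\cdot Q_{H,\theta_0}^\varepsilon(s)\,ds$ to be of finite variation, so that $Z$ decomposes into an absolutely continuous part plus a Brownian part. Both facts follow exactly as in the proof of Lemma \ref{absolute continuity}: Assumption \ref{ac} together with the apriori bound of Lemma \ref{HE} controls the Riemann--Liouville kernel in the definition of $Q_{H_i,\theta_0}^\varepsilon$ by a Beta-function factor times $(1+\|X^\varepsilon\|_{\infty,[0,T]}^{\lambda})$, which is almost surely finite by Fernique's theorem. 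With this integrability secured, the substitution is purely formal and the proof concludes.
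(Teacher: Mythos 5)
Your proposal is correct and follows essentially the same route as the paper, whose ``proof'' of Theorem \ref{ABC} is precisely the preceding discussion: Lemma \ref{absolute continuity} validates \eqref{density} and hence \eqref{RN}, and the decomposition $Z_t^i=\int_0^tQ_{H_i,\theta_0}^\varepsilon(s)\,ds+W_t^i$ gives, via your substitution $dW_t=dZ_t-Q_{H,\theta_0}^\varepsilon(t)\,dt$, the passage from the $+\tfrac{1}{2}$ in \eqref{RN} to the $-\tfrac{1}{2}$ in the theorem. Your additional remarks---equivalence of the pushforward laws from $\tilde P\sim P$, and the a.s.\ square-integrability of $Q_{H,\theta_0}^\varepsilon$ via Assumption \ref{ac}, Lemma \ref{HE} and Fernique's theorem---merely make explicit steps the paper leaves implicit.
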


 We consider the log-likelihood type function $\mathbb{L}_{H,\varepsilon}$
\begin{equation*}
 \mathbb{L}_{H,\varepsilon}(\theta):=\int_0^T\left(Q_{H,\theta}^\varepsilon(t)\right)^*dZ_t-\frac{1}{2}\int_0^T\left(Q_{H,\theta}^\varepsilon(t)\right)^*Q_{H,\theta}^\varepsilon(t)dt,
\end{equation*}
 and define the maximum likelihood type estimator by
\begin{equation}
 \hat{\theta}_\varepsilon:=\argmax_{\theta\in\bar{\Theta}}\mathbb{L}_{H,\varepsilon}(\theta).\label{likelihood}
\end{equation}
\begin{remark}
  We remark that Theorem \ref{ABC} asserts that \eqref{likelihood} is truly MLE. However, if estimation is the only goal, the estimator need not be MLE. Therefore, Assumption \ref{ac}, which we introduced to guarantee \eqref{density}, is not imposed when we derive the asymptotic properties of \eqref{likelihood}.
\end{remark}
\begin{remark}
  \label{dif}
  The log-likelihood function $\mathbb{L}_{H,\varepsilon}$ is differentiable in $\theta$ under Assumption \ref{(A1)}-\ref{A3}, and we have
  \begin{equation*}
    \begin{aligned}
      \nabla_\theta\mathbb{L}_{H,\varepsilon}(\theta)&=\sum_{i=1}^r\left(\int_0^T\nabla_\theta Q_{H_i,\theta}^\varepsilon(t)dZ_t^i-
      \int_0^TQ_{H_i,\theta}^\varepsilon(t)\nabla_\theta Q_{H_i,\theta}^\varepsilon(t)dt\right),
      \\
      \nabla_\theta^2\mathbb{L}_{H,\varepsilon}(\theta)&=\sum_{i=1}^r\Biggl(\int_0^T\nabla_\theta^2 Q_{H_i,\theta}^\varepsilon(t)dZ_t^i-\int_0^T\left(\nabla_\theta Q_{H_i,\theta}^\varepsilon(t)\right)^{\otimes2}dt
      \\
      &\hspace{180pt}-\int_0^TQ_{H_i,\theta}^\varepsilon(t)\nabla_\theta^2 Q_{H_i,\theta}^\varepsilon(t)dt\Biggl).
    \end{aligned}
  \end{equation*}
\end{remark}
In order to state asymptotic properties of the estimator \eqref{likelihood}, we make some notations. Let $\{x_t\}_{0\le t\le T}$ be the solution to the differential equation under the true value of the drift parameter:
\begin{equation}
 \label{ODE}
  \left \{
  \begin{aligned}
    \frac{dx_t}{dt}&=b(x_t,\theta_0)\\
    x_0&=X_0.
  \end{aligned}\right .
\end{equation}
 Define the function $Q_{H,\theta}=\left(Q_{H_{1},\theta},\dots,Q_{H_{r},\theta}\right)$ by
 \begin{equation*}
   Q_{H_i,\theta}(t):=d_{H_i}^{-1}t^{H_i-1/2}I^{1/2-H_i}_{0+}\left[(\cdot)^{1/2-H_i}\left(\sigma^*(x_\cdot)A^{-1}(x_\cdot)b(x_\cdot,\theta)\right)_i\right](t)
 \end{equation*}
 and we set a $m\times m$ square matrix $\Gamma_H(\theta_0)$ which is an asymptotic convariance matrix of the estimator \eqref{likelihood} as
\begin{equation*}
  \begin{aligned}
    \Gamma_H^{j,k}(\theta_0):&=\int_0^T\left(\partial_{\theta_j}Q_{H,\theta_0}(t)\right)^*\partial_{\theta_k}Q_{H,\theta_0}(t)dt
    \\
    &=\sum_{i=1}^r\gamma_i\int_0^Tt^{2H_i-1}\left(\int_0^ts^{1/2-H_i}(t-s)^{-1/2-H_i}\left[\sigma^*(x_s)A^{-1}(x_s)\partial_{\theta_j}b(x_s,\theta_0)\right]_ids\right)\\
    &\hspace{30pt}\times\left(\int_0^ts^{1/2-H_i}(t-s)^{-1/2-H_i}\left[\sigma^*(x_s)A^{-1}(x_s)\partial_{\theta_k}b(x_s,\theta_0)\right]_ids\right)dt,~~~1\le j,k\le m,
  \end{aligned}
\end{equation*}
where
\begin{equation*}
  \begin{aligned}
   \gamma_i=\left(d_{H_i}\Gamma(1/2-H_i)\right)^{-2}.
 \end{aligned}
\end{equation*}
In order to guarantee the asymptotic properties of the estimator \eqref{likelihood}, we need to impose the identifibility condition. Let
\begin{equation*}
  \begin{aligned}
      \mathbb{Y}_{H,\varepsilon}(\theta):=&\varepsilon^2\left(\mathbb{L}_{H,\varepsilon}(\theta)-\mathbb{L}_{H,\varepsilon}(\theta_0)\right)
  \end{aligned}
\end{equation*}
and let $\mathbb{Y}_H$ be the expected limit of $\mathbb{Y}_{H,\varepsilon}$ which is defined by
\begin{equation*}
  \begin{aligned}
    \mathbb{Y}_H(\theta):=
       -\sum_{i=1}^r\frac{\gamma_i}{2}\int_0^T&t^{2H_i-1}
\\
       &\times\left\{\int_0^ts^{1/2-H_i}(t-s)^{-1/2-H_i}\left[\sigma^*(x_s)A^{-1}(x_s)\left(b(x_s,\theta)-b(x_s,\theta_0)\right)\right]^ids\right\}^2dt.
  \end{aligned}
\end{equation*}
We impose two following assumptions to $\Gamma_H$ and $\mathbb{Y}_H$.
  \begin{assumption}
    \label{(A3)}
    The matrix $\Gamma_H(\theta_0)$ is positive definite.
  \end{assumption}
  \begin{assumption}
    \label{(A4)}
    There exists a positive constant $\xi(\theta_0)>0$ and such that
    \begin{equation*}
      \mathbb{Y}_H(\theta)\le-\xi(\theta_0)|\theta-\theta_0|^2,
    \end{equation*}
    for every $\theta\in\Theta$.
  \end{assumption}
  The following theorem gives the asymptotic properties of the estimator $\hat{\theta}_\varepsilon$.
\begin{theorem}
  \label{main}
 Suppose that Assumptions \ref{(A1)}-\ref{A3} and \ref{(A3)}, \ref{(A4)} are fulfilled. Then the estimator $\hat{\theta}_\varepsilon$ satisfies that
 \begin{equation*}
   \varepsilon^{-1}(\hat{\theta}_\varepsilon-\theta_0)\xrightarrow{d}N(0,\Gamma_H(\theta_0)^{-1})
 \end{equation*}
 as $\varepsilon\rightarrow 0$. Moreover, we have
 \begin{equation*}
   E\left[f\left(\varepsilon^{-1}(\hat{\theta}_\varepsilon-\theta_0)\right)\right]\rightarrow E[f(\xi)]
 \end{equation*}
 as $\varepsilon\rightarrow 0$ for every continuous function $f$ of polynomial
  growth, where $\xi\sim N(0,\Gamma_H(\theta_0)^{-1})$.
\end{theorem}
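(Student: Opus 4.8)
The plan is to recast the estimation problem within Yoshida's quasi-likelihood analysis, working throughout under $P_{\theta_0}^\varepsilon$, the law of the observed process $X^\varepsilon$, under which $dZ_t=Q_{H,\theta_0}^\varepsilon(t)\,dt+dW_t$ with $W$ an $r$-dimensional Wiener process. Introducing the local parameter $u=\varepsilon^{-1}(\theta-\theta_0)$ on the rescaled domain $U_\varepsilon:=\{u:\theta_0+\varepsilon u\in\Theta\}$, I would study the likelihood ratio random field
\[
\mathbb{Z}_\varepsilon(u):=\exp\bigl(\mathbb{L}_{H,\varepsilon}(\theta_0+\varepsilon u)-\mathbb{L}_{H,\varepsilon}(\theta_0)\bigr),\qquad u\in U_\varepsilon,
\]
whose maximizer over $U_\varepsilon$ is exactly $\hat u_\varepsilon:=\varepsilon^{-1}(\hat\theta_\varepsilon-\theta_0)$. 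Both assertions then reduce to identifying the limit law of $\argmax_{u}\mathbb{Z}_\varepsilon$ and to establishing uniform integrability of $|\hat u_\varepsilon|^k$ for every $k$.

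The analytic core is a local quadratic expansion of $\log\mathbb{Z}_\varepsilon$. Using Remark \ref{dif} and substituting $dZ_t=Q_{H,\theta_0}^\varepsilon(t)\,dt+dW_t$, the score
\[
\Delta_\varepsilon:=\varepsilon\nabla_\theta\mathbb{L}_{H,\varepsilon}(\theta_0)=\sum_{i=1}^r\int_0^T\nabla_\theta\bigl(\varepsilon Q_{H_i,\theta_0}^\varepsilon\bigr)(t)\,dW_t^i
\]
becomes a Wiener integral whose integrand $\nabla_\theta(\varepsilon Q_{H_i,\theta_0}^\varepsilon)$ no longer carries the singular factor $\varepsilon^{-1}$ and converges, as $\varepsilon\to0$, to the deterministic kernel $\nabla_\theta Q_{H_i,\theta_0}$ obtained by replacing $X^\varepsilon$ with the solution $x$ of \eqref{ODE}; hence $\Delta_\varepsilon\xrightarrow{d}N(0,\Gamma_H(\theta_0))$. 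Differentiating once more and again inserting $dZ$, the two drift terms in $\nabla_\theta^2\mathbb{L}_{H,\varepsilon}(\theta_0)$ cancel and the surviving Wiener integral is $O_P(\varepsilon)$, so that $\varepsilon^2\nabla_\theta^2\mathbb{L}_{H,\varepsilon}(\theta_0)\xrightarrow{P}-\Gamma_H(\theta_0)$. A second-order Taylor expansion then yields $\log\mathbb{Z}_\varepsilon(u)=u^*\Delta_\varepsilon-\tfrac12\,u^*\Gamma_H(\theta_0)u+r_\varepsilon(u)$ with $r_\varepsilon(u)\to0$ in probability for each fixed $u$.

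All of these convergences rest on $X^\varepsilon\to x$ together with uniform-in-$\varepsilon$ moment control, which is where the apriori estimate enters. From Lemma \ref{HE} one bounds $\|X^\varepsilon\|_{\infty,[0,T]}$ and the Gubinelli remainders by powers of $\|(B,\mathbb{B})\|_{\alpha,[0,T]}$; Fernique's theorem for the Gaussian rough path, as already used in the proof of Lemma \ref{absolute continuity}, then gives $\sup_{\varepsilon\in(0,1]}E[\|X^\varepsilon\|_{\infty,[0,T]}^p]<\infty$ for every $p\ge1$. Since $\sigma^* A^{-1}b$ is smooth in $x$ and $\theta$ with polynomially bounded derivatives by Assumptions \ref{(A1)}--\ref{A3}, these bounds transfer to $\varepsilon Q_{H,\theta}^\varepsilon$ and its $\theta$-derivatives uniformly over $\bar\Theta$, supplying the moment estimates Yoshida's scheme requires; the same estimates, evaluated along the limit $X^\varepsilon\to x$, give $\mathbb{Y}_{H,\varepsilon}(\theta)\to\mathbb{Y}_H(\theta)$.

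Finally I would verify Yoshida's sufficient conditions for the polynomial type large deviation inequality. The nondegeneracy of the quadratic part is supplied by the positive definiteness of $\Gamma_H(\theta_0)$ (Assumption \ref{(A3)}), while the global behaviour of the field is controlled by the identifiability estimate $\mathbb{Y}_H(\theta)\le-\xi(\theta_0)|\theta-\theta_0|^2$ (Assumption \ref{(A4)}), which via $\log\mathbb{Z}_\varepsilon(u)\approx\varepsilon^{-2}\mathbb{Y}_H(\theta_0+\varepsilon u)\le-\xi(\theta_0)|u|^2$ forces the field to decay away from the origin. Together with the moment bounds of the previous step, these yield, for every $L>0$, a polynomial type large deviation inequality of the form $\sup_{\varepsilon\in(0,1]}P\bigl(\sup_{u\in U_\varepsilon,\,|u|\ge R}\mathbb{Z}_\varepsilon(u)\ge e^{-\chi R^{2}}\bigr)\le C_L R^{-L}$ for suitable $\chi,C_L>0$, which is precisely the single sufficient condition established in Section 5. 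This inequality delivers tightness of $\hat u_\varepsilon$ and uniform integrability of all its polynomial moments, and combining it with the quadratic expansion, Yoshida's convergence theorem \cite{Yoshida} gives $\hat u_\varepsilon\xrightarrow{d}\Gamma_H(\theta_0)^{-1}\Delta\sim N(0,\Gamma_H(\theta_0)^{-1})$ together with $E[f(\hat u_\varepsilon)]\to E[f(\xi)]$ for every continuous $f$ of polynomial growth. I expect the large deviation inequality to be the main obstacle: verifying its hypotheses demands uniform-in-$\varepsilon$ control of the rough-path-dependent functionals $Q_{H,\theta}^\varepsilon$ and their derivatives, which is exactly why the apriori estimate of Lemma \ref{HE} and Fernique's theorem are indispensable, and this is why the verification is carried out separately in Section 5.
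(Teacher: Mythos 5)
Your proposal follows essentially the same route as the paper: a quadratic expansion of $\log\mathbb{Z}_{H,\varepsilon}(u)$ with score CLT $\varepsilon\nabla_\theta\mathbb{L}_{H,\varepsilon}(\theta_0)\xrightarrow{d}N(0,\Gamma_H(\theta_0))$, Hessian convergence $\varepsilon^2\nabla_\theta^2\mathbb{L}_{H,\varepsilon}(\theta_0)\rightarrow-\Gamma_H(\theta_0)$, uniform moment bounds obtained from the apriori estimate of Lemma \ref{HE} together with Fernique's theorem (the paper packages the needed rate as $E|X_t^\varepsilon-x_t|^p\lesssim\varepsilon^p$ in Lemma \ref{s} and the moment lemmas \ref{e}, \ref{3}, \ref{4}), and then Yoshida's polynomial type large deviation inequality under Assumptions \ref{(A3)} and \ref{(A4)} to get tightness, uniform integrability of polynomial moments, and the conclusion via Yoshida's convergence theorem. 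This matches the paper's proof in structure and in every essential ingredient, so no substantive comparison or correction is needed.
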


\section{Proof of Theorem \ref{main}}
 We first establish a core lemma to guarantee the asymptotic properties of the estimator \eqref{likelihood} when $\varepsilon\rightarrow 0$.
 \begin{lemma}
   \label{s}
  Let Assumptions \ref{(A1)}, \ref{(A2)} be fulfilled. Then for every $p>0$ and $t\in[0,T]$, there exists a constant $C>0$ such that
     \begin{align}
       E|X_t^\varepsilon-x_t|^p&\le C\varepsilon^p
       \label{ds}.
     \end{align}
 \end{lemma}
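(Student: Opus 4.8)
The plan is to reduce the statement to a uniform-in-$\varepsilon$ moment bound on the rough integral by exploiting the Lipschitz structure of the drift together with Grönwall's inequality. Subtracting the ODE \eqref{ODE} from the integral equation \eqref{SDE} (both started at $X_0$) gives
$$X_t^\varepsilon - x_t = \int_0^t \big(b(X_s^\varepsilon,\theta_0) - b(x_s,\theta_0)\big)\,ds + \varepsilon\int_0^t \sigma(X_s^\varepsilon)\,dB_s.$$
By the global Lipschitz bound $|b(x,\theta_0)-b(y,\theta_0)|\le c_1|x-y|$ from Assumption \ref{(A1)},
$$|X_t^\varepsilon-x_t| \le c_1\int_0^t |X_s^\varepsilon-x_s|\,ds + \varepsilon\sup_{u\in[0,T]}\Big|\int_0^u \sigma(X_s^\varepsilon)\,dB_s\Big|.$$
Since the second term does not depend on $t$, Grönwall's inequality yields the pathwise estimate
$$\sup_{t\in[0,T]}|X_t^\varepsilon-x_t| \le e^{c_1 T}\,\varepsilon\,\sup_{u\in[0,T]}\Big|\int_0^u \sigma(X_s^\varepsilon)\,dB_s\Big|.$$
Raising to the $p$-th power and taking expectations, it thus suffices to show that $E\big[\sup_{u\in[0,T]}|\int_0^u \sigma(X_s^\varepsilon)\,dB_s|^p\big]$ is bounded by a constant independent of $\varepsilon\in(0,1]$.

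To control the rough integral I would apply Proposition \ref{rie} to the controlled rough path $(\sigma(X^\varepsilon),\varepsilon\nabla_x\sigma(X^\varepsilon)\sigma(X^\varepsilon))\in\mathcal{D}_B^{2\alpha}$ on $[0,u]$, obtaining
$$\Big|\int_0^u \sigma(X_s^\varepsilon)\,dB_s\Big| \le |\sigma(X_0)|\,|B_{0,u}| + \varepsilon\,|\nabla_x\sigma(X_0)\sigma(X_0)|\,|\mathbb{B}_{0,u}| + C\big(\|B\|_\alpha\|R^{\sigma(X^\varepsilon)}\|_{2\alpha} + \|\mathbb{B}\|_{2\alpha}\|\sigma(X^\varepsilon)'\|_\alpha\big)u^{3\alpha}.$$
Here $\sigma$ and $\nabla_x\sigma$ are bounded by Assumption \ref{(A2)}, the increments satisfy $|B_{0,u}|\le \|B\|_\alpha T^\alpha$ and $|\mathbb{B}_{0,u}|\le\|\mathbb{B}\|_{2\alpha}T^{2\alpha}$, and $\|\sigma(X^\varepsilon)'\|_\alpha = \varepsilon\|\nabla_x\sigma(X^\varepsilon)\sigma(X^\varepsilon)\|_\alpha \lesssim \varepsilon\|X^\varepsilon\|_\alpha$. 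Inserting the apriori estimates of Lemma \ref{HE} for $\|X^\varepsilon\|_{\alpha,[0,T]}$ and $\|R^{\sigma(X^\varepsilon)}\|_{2\alpha,[0,T]}$, and using $\varepsilon\le1$ so that the explicit $\varepsilon$-factors only help, I obtain a deterministic bound of the form
$$\sup_{u\in[0,T]}\Big|\int_0^u \sigma(X_s^\varepsilon)\,dB_s\Big| \lesssim 1 + \|(B,\mathbb{B})\|_{\alpha,[0,T]}^{N}$$
for some finite exponent $N$ (coming from the $2/\alpha$ power in Lemma \ref{HE}), with a constant independent of $\varepsilon$.

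The proof then closes using the integrability of the enhanced Gaussian noise. Since $(B,\mathbb{B})$ is a Gaussian rough path, Fernique's theorem (Theorem 11.9 in \cite{Fritz}) together with Proposition \ref{rfb} guarantees that $E\|(B,\mathbb{B})\|_{\alpha,[0,T]}^{q}<\infty$ for every $q\ge1$. Raising the previous display to the $p$-th power and taking expectations therefore gives $E\big[\sup_{u}|\int_0^u\sigma(X_s^\varepsilon)\,dB_s|^p\big]\le C$ uniformly in $\varepsilon\in(0,1]$, and combining with the Grönwall estimate yields $E|X_t^\varepsilon-x_t|^p\le C\varepsilon^p$.

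I expect the main technical obstacle to be verifying that the bound on the rough integral is genuinely uniform in $\varepsilon$ as $\varepsilon\to0$. One must check that none of the $\varepsilon$-dependent constants produced by Proposition \ref{rie} and by the recursion leading to Lemma \ref{HE} blow up in the small-noise limit. This holds precisely because $\varepsilon\le1$ makes every explicit $\varepsilon$-power harmless, and because the apriori estimates of Lemma \ref{HE} are themselves stated with constants independent of $\varepsilon$; the remaining randomness is absorbed once and for all into the Gaussian moments $E\|(B,\mathbb{B})\|_{\alpha,[0,T]}^{q}$, so no hidden dependence on $\varepsilon$ survives and the clean rate $\varepsilon^p$ is preserved.
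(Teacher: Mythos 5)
Your proposal is correct and follows essentially the same route as the paper: subtract the ODE, use the Lipschitz drift bound with Gr\"onwall, expand the rough integral via Proposition \ref{rie} around the controlled rough path $(\sigma(X^\varepsilon),\varepsilon\nabla_x\sigma(X^\varepsilon)\sigma(X^\varepsilon))$, absorb the remainder with the apriori estimates of Lemma \ref{HE}, and conclude with the Gaussian rough path moment bounds from Proposition \ref{rfb} and Fernique's theorem. Taking the supremum over $u$ before applying Gr\"onwall, rather than after, is only a cosmetic rearrangement of the paper's argument.
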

 \begin{proof}
  By Assumption \ref{(A1)} and Proposition \ref{rie},  we have
 \begin{equation*}
   \begin{aligned}
     |X^\varepsilon_t-x_t|&\lesssim \int_0^t|X^\varepsilon_s-x_s|ds+\varepsilon\left|\int_0^t\sigma(X^\varepsilon_s)dB_s\right|\\
     &\lesssim\int_0^t|X^\varepsilon_s-x_s|ds+\varepsilon\left|\int_0^t\sigma(X^\varepsilon_s)dB_s-\sigma(X_0)B_t-\varepsilon\nabla_x\sigma(X_0)\sigma(X_0)\mathbb{B}_{0,t}\right|
     \\
     &\hspace{100pt}+\varepsilon\left|\sigma(X_0)B_t+\varepsilon\nabla_x\sigma(X_0)\sigma(X_0)\mathbb{B}_{0,t}\right|\\
     &\lesssim\int_0^t|X^\varepsilon_s-x_s|ds+\varepsilon\left(\|B\|_{\alpha,[0,T]}\|R^{\sigma(X^\varepsilon)}\|_{2\alpha,[0,T]}+\|\mathbb{B}\|_{2\alpha,[0,T]}\|\sigma(X^\varepsilon)\|_{\alpha,[0,T]}\right)
     t^{3\alpha}\\
     &\hspace{100pt}+\varepsilon\left|\sigma(X_0)B_t+\varepsilon\nabla_x\sigma(X_0)\sigma(X_0)\mathbb{B}_{0,t}\right|,
   \end{aligned}
 \end{equation*}
 for $1/3<\alpha<h$. By using Assumption \ref{(A2)} and Lemma \ref{HE},
 \begin{equation*}
   \begin{aligned}
        |X_t-x_t|&\lesssim\int_0^t|X_s-x_s|ds+\varepsilon\bigl(\|B\|_{\alpha,[0,T]}\|R^{\sigma(X)}\|_{2\alpha,[0,T]}+\|\mathbb{B}\|_{2\alpha,[0,T]}\|X\|_{\alpha,[0,T]}\\
        &\hspace{200pt}+\|B\|_{\alpha,[0,T]}+\|\mathbb{B}\|_{2\alpha,[0,T]}\bigl)\\
        &\lesssim\int_0^t|X_s-x_s|ds+\varepsilon\Biggl(\|B\|_{\alpha,[0,T]}\|(B,\mathbb{B})\|^{2/\alpha}_{\alpha,[0,T]}\\
        &\hspace{100pt}+\|\mathbb{B}\|_{2\alpha,[0,T]}\|(B,\mathbb{B})\|^{1/\alpha}_{\alpha,[0,T]}+\|B\|_{\alpha,[0,T]}+\|\mathbb{B}\|_{2\alpha,[0,T]}\Biggl).
   \end{aligned}
 \end{equation*}
 By the Grownwall's inequality,
 \begin{equation*}
   E|X_t-x_t|^p\lesssim\varepsilon^p E\left(\|B\|_{\alpha,[0,T]}\|(B,\mathbb{B})\|^{2/\alpha}_{\alpha,[0,T]}
   +\|\mathbb{B}\|_{2\alpha,[0,T]}\|(B,\mathbb{B})\|^{1/\alpha}_{\alpha,[0,T]}+\|B\|_{\alpha,[0,T]}+\|\mathbb{B}\|_{2\alpha,[0,T]}\right)^p,
 \end{equation*}
 and we complete the proof of Lemma \ref{s}.
 \end{proof}
 To show the asymptotic property of estimator $\hat{\theta}_\varepsilon$, we apply the polynomial type large deviation inequality investigated by Yoshida \cite{Yoshida}. Let $\mathbb{U}_\varepsilon(\theta_0):=\left\{u\in\mathbb{R}^m:\theta_0+\varepsilon u\in\Theta\right\}$ and define the random field $\mathbb{Z}_{H,\varepsilon}:\mathbb{U}_\varepsilon(\theta_0)\rightarrow\mathbb{R}_+$ by
 \begin{equation*}
   \mathbb{Z}_{H,\varepsilon}(u)=\exp\left\{\mathbb{L}_{H,\varepsilon}(\theta_0+\varepsilon u)-\mathbb{L}_{H,\varepsilon}(\theta_0)\right\},~~~u\in \mathbb{U}_\varepsilon(\theta_0).
 \end{equation*}
 Applying Taylor's formula, we have
 \begin{equation*}
   \log\mathbb{Z}_{H,\varepsilon}(u)=\varepsilon\nabla_\theta\mathbb{L}_{H,\varepsilon}(\theta_0)[u]-\frac{1}{2}u\Gamma_H(\theta_0)u^\top+R_\varepsilon(u),
 \end{equation*}
 where
 \begin{equation*}
   \begin{aligned}
     R_\varepsilon(u)&=\frac{1}{2}u\left(\varepsilon^2\nabla_\theta^2\mathbb{L}_{H,\varepsilon}(\theta_0)-(-\Gamma_H(\theta_0))\right)u^\top\\
     &~~~~~~+\frac{1}{2}\varepsilon^3\int_0^1(1-s)^2\nabla_\theta^3\mathbb{L}_{H,\varepsilon}(\theta_0+s\varepsilon u)[u,u,u]ds.
   \end{aligned}
 \end{equation*}
 and $\nabla_\theta^3\mathbb{L}_{H,\varepsilon}(\theta)[u,v,w]=\sum_{i,j,k}\partial_i\partial_j\partial_k\mathbb{L}_{H,\varepsilon}(\theta)u_iv_jw_k$.
 We prepare some lemmas for the sufficient conditions of polynomial type large deviation inequality in \cite{Yoshida}.
 \begin{lemma}
   \label{e}
   For every $p>0$,
   \begin{equation*}
     \sup_{0<\varepsilon<1}E\left[\left(\varepsilon^{-1}\left|\varepsilon^2\nabla_{\theta}^2\mathbb{L}_{H,\varepsilon}(\theta_0)-(-\Gamma_H(\theta_0))\right|\right)^p\right]<\infty.
   \end{equation*}
 \end{lemma}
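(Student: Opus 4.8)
The plan is to first simplify $\nabla_\theta^2\mathbb{L}_{H,\varepsilon}(\theta_0)$ using the semimartingale structure of $Z$, then rescale so that the $\varepsilon^2$ prefactor converts the Hessian into a quantity limiting to $-\Gamma_H(\theta_0)$, and finally control the two resulting error terms by the Burkholder--Davis--Gundy inequality together with the $L^p$-estimate of Lemma \ref{s} and the uniform moment bounds from Lemma \ref{HE} and Fernique's theorem. Recall that under $P$ one has $dZ_t^i=Q_{H_i,\theta_0}^\varepsilon(t)\,dt+dW_t^i$ with $W$ a Wiener process. Substituting this into the formula of Remark \ref{dif} at $\theta=\theta_0$, the Lebesgue piece $\int_0^T\nabla_\theta^2 Q_{H_i,\theta_0}^\varepsilon\,Q_{H_i,\theta_0}^\varepsilon\,dt$ arising from $\int\nabla_\theta^2 Q^\varepsilon\,dZ$ cancels the term $-\int_0^T Q_{H_i,\theta_0}^\varepsilon\nabla_\theta^2 Q_{H_i,\theta_0}^\varepsilon\,dt$, leaving
\[
\nabla_\theta^2\mathbb{L}_{H,\varepsilon}(\theta_0)=\sum_{i=1}^r\Bigl(\int_0^T\nabla_\theta^2 Q_{H_i,\theta_0}^\varepsilon(t)\,dW_t^i-\int_0^T\bigl(\nabla_\theta Q_{H_i,\theta_0}^\varepsilon(t)\bigr)^{\otimes2}\,dt\Bigr).
\]

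Next, since $Q_{H_i,\theta}^\varepsilon$ carries an explicit factor $\varepsilon^{-1}$, I would set $\bar Q_{H_i,\theta}^\varepsilon:=\varepsilon Q_{H_i,\theta}^\varepsilon$, which has exactly the form of $Q_{H_i,\theta}$ but with $X^\varepsilon$ in place of the ODE solution $x$, so that $\nabla_\theta^k Q_{H_i,\theta}^\varepsilon=\varepsilon^{-1}\nabla_\theta^k\bar Q_{H_i,\theta}^\varepsilon$. Multiplying the display above by $\varepsilon^2$ and adding $\Gamma_H(\theta_0)=\sum_i\int_0^T(\nabla_\theta Q_{H_i,\theta_0})^{\otimes2}\,dt$ gives
\[
\varepsilon^2\nabla_\theta^2\mathbb{L}_{H,\varepsilon}(\theta_0)+\Gamma_H(\theta_0)=\varepsilon\sum_{i=1}^r\int_0^T\nabla_\theta^2\bar Q_{H_i,\theta_0}^\varepsilon(t)\,dW_t^i+\sum_{i=1}^r\int_0^T\Bigl[\bigl(\nabla_\theta Q_{H_i,\theta_0}(t)\bigr)^{\otimes2}-\bigl(\nabla_\theta\bar Q_{H_i,\theta_0}^\varepsilon(t)\bigr)^{\otimes2}\Bigr]dt,
\]
which I denote $I_\varepsilon+II_\varepsilon$. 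The goal $E[(\varepsilon^{-1}|I_\varepsilon+II_\varepsilon|)^p]\le C$ then splits into showing $\varepsilon^{-1}I_\varepsilon=O(1)$ and $\varepsilon^{-1}II_\varepsilon=O(1)$ in $L^p$, uniformly in $\varepsilon\in(0,1)$.

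For the martingale part, $\varepsilon^{-1}I_\varepsilon=\sum_i\int_0^T\nabla_\theta^2\bar Q_{H_i,\theta_0}^\varepsilon\,dW^i$, and by Burkholder--Davis--Gundy it suffices to bound $E\bigl(\int_0^T|\nabla_\theta^2\bar Q_{H_i,\theta_0}^\varepsilon(t)|^2\,dt\bigr)^{p/2}$; since $\nabla_\theta^2\bar Q^\varepsilon$ is a Riemann--Liouville integral of a function growing polynomially in $\|X^\varepsilon\|_{\infty,[0,T]}$ under Assumptions \ref{(A1)}--\ref{A3}, Lemma \ref{HE} and Fernique's theorem (Theorem 11.9 in \cite{Fritz}) make this finite uniformly in $\varepsilon$. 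For the deterministic part I would use $a^{\otimes2}-b^{\otimes2}=(a-b)\otimes a+b\otimes(a-b)$, reducing $II_\varepsilon$ to an estimate of $\nabla_\theta\bar Q_{H_i,\theta_0}^\varepsilon-\nabla_\theta Q_{H_i,\theta_0}$, whose only difference is $X^\varepsilon$ versus $x$ inside smooth, polynomially controlled coefficients; by the Lipschitz dependence and Lemma \ref{s} this difference is $O(\varepsilon)$ in $L^p$, so $\varepsilon^{-1}II_\varepsilon=O(1)$.

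The main obstacle is making these two integrand estimates uniform in $\varepsilon$ while passing the singular kernel $s^{1/2-H_i}(t-s)^{-1/2-H_i}$ and the operator $I_{0+}^{1/2-H_i}$ through the moment and Lipschitz bounds: one must check that $\nabla_\theta^2\bar Q^\varepsilon$ retains polynomial growth in $\|X^\varepsilon\|_\infty$ after the fractional integral (so that Fernique applies) and that $\nabla_\theta\bar Q^\varepsilon-\nabla_\theta Q$ is genuinely of order $|X^\varepsilon-x|$ in a way compatible with the Beta-function bound $\beta(3/2-H_i,1/2-H_i)$ already exploited in Lemma \ref{absolute continuity}. Once these kernel estimates are secured, combining the two bounds and taking the supremum over $\varepsilon\in(0,1)$ finishes the proof.
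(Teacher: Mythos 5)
Your proposal is correct and follows essentially the same route as the paper's own proof: the identical cancellation of the drift terms produces the paper's decomposition \eqref{Qd} into a martingale part and a deterministic part, the martingale part is handled by Burkholder and Minkowski together with the moment bounds from Lemma \ref{HE} (and Fernique for the resulting Gaussian rough-path norms), and the deterministic part is handled by your bilinear splitting combined with the mean value theorem and Lemma \ref{s}. The kernel issue you flag as the main obstacle is resolved in the paper exactly as you anticipate, by passing Minkowski's integral inequality through the singular kernel and using the Beta-function identity $\int_0^t s^{1/2-H_i}(t-s)^{-1/2-H_i}\,ds = t^{1-2H_i}\beta(3/2-H_i,1/2-H_i)$.
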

 \begin{proof}
   By Remark \ref{dif}, it folows that
   \begin{equation}
     \label{Qd}
     \begin{aligned}
       \varepsilon^2\nabla_{\theta}^2\mathbb{L}_{H,\varepsilon}(\theta_0)-(-\Gamma_H(\theta_0))&=\varepsilon^2\sum_{i=1}^r\int_0^T\nabla_\theta^2 Q_{H_i,\theta_0}^\varepsilon(t)dW_t^i
       \\
       &\hspace{100pt}-\left(\varepsilon^2\sum_{i=1}^r\int_0^T\left(\nabla_\theta Q_{H_i,\theta_0}^\varepsilon(t)\right)^{\otimes2}dt
           -\Gamma_H(\theta_0)\right).
     \end{aligned}
   \end{equation}
   By Burkholder's and Minkowski's inequalities, Assumption \ref{(A1)}-\ref{A3} and Lemma \ref{HE}, the stochastic integral part of \eqref{Qd} is estimated as
   \begin{equation*}
     \begin{aligned}
       &E\left(\varepsilon^2\left|\int_0^T\partial_{\theta_j}\partial_{\theta_k} Q_{H_i,\theta_0}^\varepsilon(t)dW_t^i\right|\right)^p
       \lesssim \varepsilon^{2p}\left(\int_0^T\left \|\partial_{\theta_j}\partial_{\theta_k} Q_{H_i,\theta_0}^\varepsilon(t)\right\|_{L^p(\Omega)}^2dt\right)^{p/2}\\
       &\lesssim\varepsilon^p\left(\int_0^Tt^{2H_i-1}\left|\int_0^ts^{1/2-H_i}(t-s)^{-1/2-H_i}\left\|\partial_{\theta_j}\partial_{\theta_k}\left[\sigma^*(X_s^\varepsilon)A^{-1}(X_s^\varepsilon)b(X_s^\varepsilon,\theta_0)\right]^i\right\|_{L^p(\Omega)}ds\right|^2dt\right)^{p/2}\\
       &\lesssim\varepsilon^p\sup_{0\le s\le T}\left\|1+|X_s^\varepsilon|^N\right\|^p_{L^p(\Omega)}\left(\int_0^Tt^{1-2H_i}dt\right)^{p/2}\lesssim\varepsilon^p,
     \end{aligned}
   \end{equation*}
   for every $1\le i\le r$ and $1\le j,k\le m$. We shall estimate the second part of \eqref{Qd}. For every $1\le j,k\le m$,
   \begin{equation*}
     \begin{aligned}
       &\varepsilon^2\int_0^T\left(\partial_{\theta_j} Q_{H,\theta_0}^\varepsilon(t)\right)^*\partial_{\theta_k} Q_{H,\theta_0}^\varepsilon(t)dt-\Gamma_H^{j,k}(\theta_0)
       \\
       &=\int_0^T\left[\varepsilon\left(\partial_{\theta_j} Q_{H,\theta_0}^\varepsilon(t)\right)^*\left(\varepsilon\partial_{\theta_k} Q_{H,\theta_0}^\varepsilon(t)-\partial_{\theta_k} Q_{H,\theta_0}(t)\right)
       +\left(\varepsilon\partial_{\theta_j} Q_{H,\theta_0}^\varepsilon(t)-\partial_{\theta_j} Q_{H,\theta_0}(t)\right)^*\partial_{\theta_k} Q_{H,\theta_0}(t)\right]dt.
     \end{aligned}
   \end{equation*}
   By H\"older's and Minkowski's inequalities, the mean value theorem and Lemma \ref{dif}, we can show that
   \begin{equation*}
     \begin{aligned}
       &E\left|\int_0^T\varepsilon\left(\partial_{\theta_j} Q_{\theta_0}^\varepsilon(t)\right)^*\left(\varepsilon\partial_{\theta_k} Q_{\theta_0}^\varepsilon(t)-\partial_{\theta_k} Q_{\theta_0}(t)\right)dt\right|^p
       \\
       &=E\Biggl|\sum_{i=1}^r\gamma_i\int_0^Tt^{2H_i-1}\Bigl(\int_0^t s^{1/2-H_i}(t-s)^{-1/2-H_i}
       \\
       &\hspace{100pt}\times\bigl(\sigma^*(X_s^\varepsilon)A^{-1}(X_s^\varepsilon)\partial_{\theta_k}b(X_s^\varepsilon,\theta_0)-\sigma^*(x_s)A^{-1}(x_s)\partial_{\theta_k}b(x_s,\theta_0)\bigl)_ids\Bigl)\\
       &\hspace{120pt}\cdot\left(\int_0^t s^{1/2-H_i}(t-s)^{-1/2-H_i}\left(\sigma^*(X_s^\varepsilon)A^{-1}(X_s^\varepsilon)\partial_{\theta_j}b(X_s^\varepsilon,\theta_0)\right)_ids\right)dt\Biggl|^p\\
       &\lesssim\Biggl|\sum_{i=1}^r\gamma_i\int_0^Tt^{2H_i-1}\Bigl(\int_0^t s^{1/2-H_i}(t-s)^{-1/2-H_i}\\
       &\hspace{100pt}\left\|\sigma^*(X_s^\varepsilon)A^{-1}(X_s^\varepsilon)\partial_{\theta_k}b(X_s^\varepsilon,\theta_0)-\sigma^*(x_s)A^{-1}(x_s)\partial_{\theta_k}b(x_s,\theta_0)\right\|_{L^{2p}(\Omega)}ds\Bigl)\\
       &\hspace{100pt}\times\left(\int_0^t s^{1/2-H_i}(t-s)^{-1/2-H_i}\left\|\sigma^*(X_s^\varepsilon)A^{-1}(X_s^\varepsilon)\partial_{\theta_j}b(X_s^\varepsilon,\theta_0)\right\|_{L^{2p}(\Omega)}ds\right)dt\Biggl|^p\\
       &\lesssim\sup_{0\le s\le T}\left\|\left(1+|X_s^\varepsilon|^N+|x_s|^N\right)\left|X_s^\varepsilon-x_s\right|\right\|_{L^{2p}(\Omega)}^p\left\|\left(1+|X_s^\varepsilon|^N\right)\right\|_{L^{2p}(\Omega)}^p\left(\sum_{i=1}^r\gamma_i\int_0^Tt^{1-2H_i}dt\right)^p
       \lesssim\varepsilon^p.
     \end{aligned}
   \end{equation*}
   In the same way, we can show that
   \begin{equation*}
     E\left|\int_0^T
     \left(\varepsilon\partial_{\theta_j} Q_{H,\theta_0}^\varepsilon(t)-\partial_{\theta_j} Q_{H,\theta_0}(t)\right)^*\partial_{\theta_k} Q_{H,\theta_0}(t)dt\right|^p\lesssim\varepsilon^p.
   \end{equation*}
 Therefore
 \begin{equation*}
   \sup_{0<\varepsilon<1}E\left[\left(\varepsilon^{-1}\left|\varepsilon^2\nabla_{\theta}^2\mathbb{L}_{H,\varepsilon}(\theta_0)-(-\Gamma_H(\theta_0))\right|\right)^p\right]<\infty,
 \end{equation*}
 and we complete the proof of Lemma \ref{e}.
\end{proof}
\begin{lemma}
  \label{3}
    For every $p\ge 2$,
  \begin{equation*}
    \sup_{0<\varepsilon<1}E\left[\sup_{\theta\in\Theta}\left|\varepsilon^2\nabla_{\theta}^3\mathbb{L}_{H,\varepsilon}(\theta)\right|^p\right]<\infty.
  \end{equation*}
\end{lemma}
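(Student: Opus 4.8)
The plan is to differentiate $\mathbb{L}_{H,\varepsilon}$ one order beyond Remark \ref{dif} and then trade the supremum over $\theta$ for pointwise $L^p$ moment bounds via Sobolev's inequality. Since $Q_{H_i,\theta}^\varepsilon$ depends on $\theta$ only through $b(X_\cdot^\varepsilon,\theta)$ and enters $\mathbb{L}_{H,\varepsilon}$ at most quadratically, differentiating the expression for $\nabla_\theta^2\mathbb{L}_{H,\varepsilon}$ in Remark \ref{dif} once more gives
\begin{equation*}
  \nabla_\theta^3\mathbb{L}_{H,\varepsilon}(\theta)=\sum_{i=1}^r\left(\int_0^T\nabla_\theta^3 Q_{H_i,\theta}^\varepsilon(t)\,dZ_t^i-3\int_0^T\nabla_\theta Q_{H_i,\theta}^\varepsilon(t)\,\nabla_\theta^2 Q_{H_i,\theta}^\varepsilon(t)\,dt-\int_0^T Q_{H_i,\theta}^\varepsilon(t)\,\nabla_\theta^3 Q_{H_i,\theta}^\varepsilon(t)\,dt\right).
\end{equation*}
Working under $P$ (equivalently $P_{\theta_0}^\varepsilon$), I would insert the decomposition $dZ_t^i=Q_{H_i,\theta_0}^\varepsilon(t)\,dt+dW_t^i$, where $\{W_t\}$ is an $r$-dimensional $P$-Wiener process, so that the first term splits into a martingale part driven by $dW^i$ and an absolutely continuous part containing $Q_{H_i,\theta_0}^\varepsilon$. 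Unlike the case $\theta=\theta_0$ treated in \eqref{Qd}, the drift contribution does not cancel the $Q_{H_i,\theta}^\varepsilon\nabla_\theta^3 Q_{H_i,\theta}^\varepsilon$ term, but both are absolutely continuous and hence harmless.

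The essential bookkeeping is that each $\theta$-derivative of $Q_{H_i,\theta}^\varepsilon$ carries the prefactor $(\varepsilon d_{H_i})^{-1}$, so $\nabla_\theta^k Q_{H_i,\theta}^\varepsilon$ is of order $\varepsilon^{-1}$ uniformly in $\theta$. After multiplying by $\varepsilon^2$, the $dW^i$-martingale term is $O(\varepsilon)$, while the $Q_{H_i,\theta_0}^\varepsilon$-drift term and the two product terms (whose derivative orders sum to three, yielding at worst $\varepsilon^{-2}$) are each $O(1)$. To handle $\sup_{\theta\in\Theta}$, I would invoke the standing embedding $W^{1,q}(\Theta)\hookrightarrow C(\bar\Theta)$: for $q$ large enough, Sobolev's inequality and Fubini give
\begin{equation*}
  E\Bigl[\sup_{\theta\in\Theta}\bigl|\varepsilon^2\nabla_\theta^3\mathbb{L}_{H,\varepsilon}(\theta)\bigr|^p\Bigr]\lesssim\int_\Theta E\bigl|\varepsilon^2\nabla_\theta^3\mathbb{L}_{H,\varepsilon}(\theta)\bigr|^p\,d\theta+\int_\Theta E\bigl|\varepsilon^2\nabla_\theta^4\mathbb{L}_{H,\varepsilon}(\theta)\bigr|^p\,d\theta,
\end{equation*}
valid directly for $p>m$ and extended to all $p\ge2$ by Jensen's inequality. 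The fourth $\theta$-derivative is available because $b\in C^{1,4}$ by Assumption \ref{(A1)}, and the same $\varepsilon$-counting shows $\varepsilon^2\nabla_\theta^4\mathbb{L}_{H,\varepsilon}$ is likewise $O(1)$ in $L^p$.

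It then remains to bound the pointwise moments $E|\varepsilon^2\nabla_\theta^k\mathbb{L}_{H,\varepsilon}(\theta)|^p$ for $k\in\{3,4\}$, uniformly in $(\theta,\varepsilon)$, by the same machinery as in Lemma \ref{e}: Burkholder--Davis--Gundy for the $dW^i$-integrals, Minkowski's integral inequality to pull the $L^p(\Omega)$-norm inside the Riemann--Liouville integral, the polynomial growth of $b$ and its $\theta$-derivatives (Assumption \ref{(A1)}) together with the boundedness of $\sigma$ and $A^{-1}$ (Assumptions \ref{(A2)}, \ref{A3}), and the uniform moment estimate $\sup_{0<\varepsilon\le1}\sup_{t\in[0,T]}E|X_t^\varepsilon|^{qN}<\infty$, which follows from Lemma \ref{HE} combined with Fernique's theorem for Gaussian rough paths (Proposition \ref{rfb} and Theorem 11.9 in \cite{Fritz}). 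The kernel singularities are integrable since $H_i\in(1/3,1/2)$ gives $\int_0^T t^{1-2H_i}\,dt<\infty$ and a finite Beta factor $\beta(3/2-H_i,1/2-H_i)$. The genuinely new obstacle relative to Lemma \ref{e} is the uniform-in-$\theta$ control concealed in $\sup_{\theta}$; resolving it through the Sobolev device is what forces the estimate one order higher, so the main work is to verify that the moment bounds remain uniform in $\theta$ across the additional differentiation while the $\varepsilon^2$ factor exactly compensates the worst $\varepsilon^{-2}$ products.
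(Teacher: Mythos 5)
Your proposal is correct and follows essentially the same route as the paper's (very terse) proof: a Sobolev embedding $W^{1,p}(\Theta)\hookrightarrow C(\bar\Theta)$ to replace $\sup_{\theta\in\Theta}$ by $\Theta$-integrals of the third and fourth $\theta$-derivatives, followed by the pointwise $L^p$ moment machinery of Lemma \ref{e} (Burkholder, Minkowski, the polynomial-growth bounds of Assumption \ref{(A1)}, and the moment estimates from Lemma \ref{HE} with Fernique's theorem). Your additional details --- the explicit formula for $\nabla_\theta^3\mathbb{L}_{H,\varepsilon}$, the decomposition $dZ_t^i=Q_{H_i,\theta_0}^\varepsilon(t)\,dt+dW_t^i$ with the observation that the drift no longer cancels at $\theta\ne\theta_0$, the $\varepsilon$-counting showing $\varepsilon^2$ exactly compensates the worst $\varepsilon^{-2}$ products, and the Jensen step covering $2\le p\le m$ --- are correct elaborations of what the paper compresses into ``by the same argument as in the proof of Lemma \ref{e}.''
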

\begin{proof}
  By Sobolev's inequality, for every $p>d$
  \begin{equation*}
    \sup_{\theta\in\Theta}\left|\nabla_{\theta}^3\mathbb{L}_{H,\varepsilon}(\theta_0)\right|^p\lesssim\int_\Theta\left(\left|\nabla_{\theta}^3\mathbb{L}_{H,\varepsilon}(\theta)\right|^p+\left|\nabla_{\theta}^4\mathbb{L}_{H,\varepsilon}(\theta)\right|^p\right)d\theta.
  \end{equation*}
  By the same argument as in the proof of Lemma \ref{e}, we can show that
  \begin{equation*}
    \sup_{0<\varepsilon<1}\varepsilon^{2p}E\left[\int_\Theta\left(\left|\nabla_{\theta}^3\mathbb{L}_{H,\varepsilon}(\theta)\right|^p+\left|\nabla_{\theta}^4\mathbb{L}_{H,\varepsilon}(\theta)\right|^p\right)d\theta\right]<\infty.
  \end{equation*}
\end{proof}
We can check the following lemma holds true. The proof is similar to the one for Lemmas \ref{e} and \ref{3}.
\begin{lemma}
  \label{4}
  For every $p\ge 2$,
  \begin{equation*}
    \sup_{0<\varepsilon<1}E|\varepsilon\partial_\theta\mathbb{L}_{H,\varepsilon}(\theta_0)|^p<\infty,
  \end{equation*}
  and
  \begin{equation*}
    \sup_{0<\varepsilon<1}E\left(\sup_{\theta\in\Theta}\varepsilon\left|\mathbb{Y}_{H,\varepsilon}(\theta)-\mathbb{Y}_H(\theta)\right|\right)^p<\infty
  \end{equation*}
\end{lemma}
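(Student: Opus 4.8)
The plan is to follow the pattern of Lemmas \ref{e} and \ref{3}: at $\theta_0$ the stochastic part of the likelihood collapses to a single Wiener integral, and the supremum over $\theta\in\Theta$ is absorbed by the Sobolev embedding $W^{1,q}(\Theta)\hookrightarrow C(\bar\Theta)$.

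For the first bound, I would substitute the representation $dZ_t^i=Q_{H_i,\theta_0}^\varepsilon(t)\,dt+dW_t^i$ into the formula for $\nabla_\theta\mathbb{L}_{H,\varepsilon}(\theta_0)$ from Remark \ref{dif}. The two drift contributions cancel, leaving the pure martingale
\[\partial_\theta\mathbb{L}_{H,\varepsilon}(\theta_0)=\sum_{i=1}^r\int_0^T\nabla_\theta Q_{H_i,\theta_0}^\varepsilon(t)\,dW_t^i.\]
Since $\varepsilon\,\nabla_\theta Q_{H_i,\theta_0}^\varepsilon$ carries no negative power of $\varepsilon$, I would estimate $E|\varepsilon\,\partial_\theta\mathbb{L}_{H,\varepsilon}(\theta_0)|^p$ by Burkholder--Davis--Gundy followed by Minkowski's integral inequality exactly as in Lemma \ref{e}: expressing $\varepsilon\,\partial_{\theta_j}Q_{H_i,\theta_0}^\varepsilon(t)$ through the Riemann--Liouville kernel $s^{1/2-H_i}(t-s)^{-1/2-H_i}$, bounding the integrand in $L^p(\Omega)$ by $\|1+|X_s^\varepsilon|^N\|_{L^p(\Omega)}$ using Assumptions \ref{(A1)}--\ref{A3}, and making $\sup_{s\in[0,T]}\|1+|X_s^\varepsilon|^N\|_{L^p(\Omega)}$ finite uniformly in $\varepsilon$ via Lemma \ref{HE} together with Fernique's theorem and Proposition \ref{rfb}. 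The leftover deterministic integral is $\int_0^T t^{1-2H_i}\,dt<\infty$ because $H_i<1/2$, which gives the first assertion.

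For the second assertion, I would first bring $\mathbb{Y}_{H,\varepsilon}$ into workable form. Inserting $dZ_t^i=Q_{H_i,\theta_0}^\varepsilon(t)\,dt+dW_t^i$ and completing the square yields
\[\mathbb{Y}_{H,\varepsilon}(\theta)=-\frac{1}{2}\sum_{i=1}^r\int_0^T|D_{H_i,\theta}^\varepsilon(t)|^2\,dt+\varepsilon\sum_{i=1}^r\int_0^T D_{H_i,\theta}^\varepsilon(t)\,dW_t^i,\]
where $D_{H_i,\theta}^\varepsilon:=\varepsilon(Q_{H_i,\theta}^\varepsilon-Q_{H_i,\theta_0}^\varepsilon)$ is built from $X^\varepsilon$ and involves $b(\cdot,\theta)-b(\cdot,\theta_0)$, while the limit reads $\mathbb{Y}_H(\theta)=-\frac12\sum_i\int_0^T|D_{H_i,\theta}|^2\,dt$ with $D_{H_i,\theta}$ the same quantity built from the ODE solution $x$ of \eqref{ODE}. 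I would then split $\mathbb{Y}_{H,\varepsilon}(\theta)-\mathbb{Y}_H(\theta)$ into the deterministic difference $-\frac12\sum_i\int_0^T(|D_{H_i,\theta}^\varepsilon|^2-|D_{H_i,\theta}|^2)\,dt$ and the stochastic term $\varepsilon\sum_i\int_0^T D_{H_i,\theta}^\varepsilon\,dW^i$. Writing the former as a product $(D^\varepsilon-D)(D^\varepsilon+D)$, the factor $D^\varepsilon-D$ carries the increment $X^\varepsilon-x$ and is therefore of order $\varepsilon$ in $L^p(\Omega)$ by Lemma \ref{s}, while $D^\varepsilon+D$ has uniformly bounded moments as above; the stochastic term is of order $\varepsilon$ by Burkholder--Davis--Gundy since $D^\varepsilon$ is of order one. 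Both pieces thus have $L^p(\Omega)$ norms bounded uniformly in $\varepsilon$ and $\theta$.

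Finally, to upgrade these pointwise-in-$\theta$ estimates to the uniform bound for $E(\sup_{\theta\in\Theta}\varepsilon|\mathbb{Y}_{H,\varepsilon}(\theta)-\mathbb{Y}_H(\theta)|)^p$, I would apply the Sobolev inequality as in Lemma \ref{3}, dominating $\sup_{\theta\in\Theta}|\cdot|^p$ by $\int_\Theta(|\cdot|^p+|\nabla_\theta\cdot|^p)\,d\theta$ and treating each $\theta$-derivative by the same kernel/Burkholder--Davis--Gundy scheme; differentiating in $\theta$ merely replaces $b(\cdot,\theta)-b(\cdot,\theta_0)$ by $\nabla_\theta b(\cdot,\theta)$, still polynomially bounded by Assumption \ref{(A1)}, and Fubini legitimizes interchanging the $\Theta$-integration with the Wiener integral. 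Since $\varepsilon<1$, the prefactor $\varepsilon$ only sharpens the estimate. I expect the main obstacle to be bookkeeping rather than conceptual: keeping the singular kernel $(t-s)^{-1/2-H_i}$ integrable after each use of Minkowski and after each $\theta$-differentiation, and extracting the $\varepsilon$-rate in the deterministic-difference term uniformly in $\theta$ from Lemma \ref{s}. All of these steps already appear in Lemmas \ref{e}, \ref{3} and \ref{s}, so no genuinely new difficulty arises.
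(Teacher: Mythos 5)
Your proposal is correct and follows exactly the route the paper intends: the paper gives no explicit proof of Lemma \ref{4}, stating only that it is ``similar to the one for Lemmas \ref{e} and \ref{3},'' and your argument is precisely that pattern spelled out --- cancellation of the drift via $dZ_t^i=Q_{H_i,\theta_0}^\varepsilon(t)\,dt+dW_t^i$ leaving a martingale handled by Burkholder--Minkowski and the Beta-function kernel estimate, the completed-square decomposition of $\mathbb{Y}_{H,\varepsilon}-\mathbb{Y}_H$ with the rate from Lemma \ref{s}, and the Sobolev embedding to absorb $\sup_{\theta\in\Theta}$. No gaps; your bookkeeping of the $\varepsilon$-powers (with $Q^\varepsilon$ carrying $\varepsilon^{-1}$, so $\varepsilon\nabla_\theta Q^\varepsilon_{H_i,\theta_0}$ and $D^\varepsilon_{H_i,\theta}$ are order one) matches the computations displayed in the proof of Lemma \ref{e}.
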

From Lemma \ref{4} and the proof of Lemma \ref{e}, we obtain that
\begin{equation}
\label{n}
\varepsilon\partial_\theta\mathbb{L}_{H,\varepsilon}(\theta_0)\xrightarrow{d}N(0,\Gamma_H(\theta_0)),
\end{equation}
as $\varepsilon\rightarrow 0$. Moreover, Lemmas \ref{e}, \ref{3} and the convergence \eqref{n} give the local asymptotic normality of $Z_{H,\varepsilon}(u)$:
\begin{equation*}
  Z_{H,\epsilon}(u)\xrightarrow{d}Z_{H}(u):=\exp\left(\Delta_H(\theta_0)u-\frac{1}{2}\Gamma_H(\theta_0)[u,u]\right),
\end{equation*}
where $\Delta_H(\theta_0)\sim N(0,\Gamma_H(\theta_0))$. In the sequel, we check conditions theorem 3, (c) in \cite{Yoshida}. Lemma \ref{e} and \ref{3} give [A1$''$] and [A4$'$] in \cite{Yoshida}
with $\beta_1\thickapprox1/4,~\rho_1,\rho_2,\beta,\beta_2\thickapprox0$. The conditions [B1] and [B2] in \cite{Yoshida} follow from Assumptions \ref{(A3)}, \ref{(A4)}. Moreover, the condition [A6] is true from Lemma \ref{4}. Now Theorem 3 in \cite{Yoshida} yields the inequality
\begin{equation}
  \sup_{0<\varepsilon<1}P\left[\sup_{|u|\ge r}\mathbb{Z}_{H,\varepsilon}(u)\ge e^{-r}\right]\lesssim r^{-L}
\end{equation}
hold for any $r>0$ and $L>0$. Since $u_\varepsilon:=\varepsilon^{-1}(\theta_\varepsilon-\theta_0)$ maximizes the random field $Z_{H,\varepsilon}$, the sequence $\{f(u_\varepsilon)\}_{\varepsilon}$ is uniformly integrable for every continuous function $f$ such that for every $x\in\mathbb{R}$, $f(x)\lesssim 1+|x|^N$ for some $N>0$. Indeed,
\begin{equation*}
  \begin{aligned}
    \sup_{0<\varepsilon<1}P\left(|u_\varepsilon|\ge r\right)&\le\sup_{0<\varepsilon<1}P\left(\sup_{|u|\ge r}\mathbb{Z}_{H,\varepsilon}(u)\ge\mathbb{Z}_{H,\varepsilon}(0)\right)
    \lesssim r^{-L},
  \end{aligned}
\end{equation*}
for every $r>0$ and $L>0$. Thus
\begin{equation*}
  \begin{aligned}
     \sup_{0<\varepsilon<1}E[|f(u_\varepsilon)|]&\lesssim 1+\int_0^\infty\sup_{0<\varepsilon<1} P\left(|u_\varepsilon|>r^{1/N}\right)dr<\infty.
  \end{aligned}
\end{equation*}
  Let $B(R):=\left\{u\in\mathbb{R}^m; |u|\le R\right\}$. In the sequel, we prove that
\begin{equation}
  \label{f}
  \log\mathbb{Z_{H,\varepsilon}}\xrightarrow{d}\log\mathbb{Z}_{H,\varepsilon}~~~{\rm in}~C(B(R)),
\end{equation}
as $\varepsilon\rightarrow 0$. If we can show the convergence \eqref{f}, we obtain the asymptotic normality:
\begin{equation*}
   \varepsilon^{-1}(\theta_\varepsilon-\theta_0)\xrightarrow{d}N(0,\Gamma_H(\theta_0)^{-1})
\end{equation*}
as $\varepsilon\rightarrow 0$ by Theorem 5 in Yoshida \cite{Yoshida}. Due to linearity in $u$ of the weak convergence term $\varepsilon\nabla_\theta\mathbb{L}_{H,\varepsilon}(\theta_0)[u]$, the convergence of finite--dimensional distribution holds true. It remains to show the tightness of the family $\left\{\log\mathbb{Z}_{H,\varepsilon}(u)\right\}_{u\in B(R)}$. By the Kolmogorov tightness criterion, it suffices to show that
for every $R>0$ there exists a constant $p>0,~\gamma>d$ and $C>0$ such that
\begin{equation}
  \label{tight}
  E\left|\log\mathbb{Z}_{H,\varepsilon}(u_1)-\log\mathbb{Z}_{H,\varepsilon}(u_2)\right|^p\le C|u_1-u_2|^\gamma,
\end{equation}
for $u_1,u_2\in B(R).$ For a number $p>0$ large enough, the inequality \eqref{tight} is shown easily by Lemmas \ref{e}, \ref{3} and \ref{4}. Therefore, we complete the proof of Theorem \ref{main}.

\section*{Declaration}
\textbf{Conflict of interest}\\
The author declares that there is no conflict of interest.
\bibliographystyle{unsrtnat}

\end{document}